\definecolor{red}{rgb}{1,0,0}
\definecolor{blue}{rgb}{0,0,.7}
\definecolor{green}{rgb}{0,.6,0}
\definecolor{purp}{rgb}{.5,0,.5}
\numberwithin{figure}{section}   
\newtheorem{thm}{Theorem}[section]
\newtheorem{cor}[thm]{Corollary}
\newtheorem{lem}[thm]{Lemma}
\newtheorem{prop}[thm]{Proposition}
\theoremstyle{definition}
\newtheorem{rem}[thm]{Remark}
\theoremstyle{definition}
\theoremstyle{definition}
\newcommand{\Z}{\operatorname{Z}}
\newcommand{\Zp}{\operatorname{Z}_+}
\newcommand{\Zs}{\operatorname{Z}_-}
\newcommand{\thz}{\operatorname{th}}
\newcommand{\thp}{\operatorname{th}_+}
\newcommand{\thc}{\operatorname{th}_c}
\newcommand{\ths}{\operatorname{th}_-}
\newcommand{\dist}{\operatorname{dist}}  
\newcommand{\pt}{\operatorname{pt}}
\newcommand{\ptp}{\operatorname{pt}_+}
\newcommand{\pts}{\operatorname{pt}_-}
\newcommand{\wh}{\widehat}
\newcommand{\wt}{\widetilde}
\newcommand{\bit}{\begin{itemize}}
\newcommand{\eit}{\end{itemize}}
\newcommand{\ben}{\begin{enumerate}}
\newcommand{\een}{\end{enumerate}}
\newcommand{\beq}{\begin{equation}}
\newcommand{\eeq}{\end{equation}}
\newcommand{\bea}{\begin{eqnarray*}} 
\newcommand{\eea}{\end{eqnarray*}}
\newcommand{\bpf}{\begin{proof}}
\newcommand{\epf}{\end{proof}\ms}
\newcommand{\bmt}{\begin{bmatrix}}
\newcommand{\emt}{\end{bmatrix}}
\newcommand{\ms}{\medskip}
\newcommand{\lc}{\left\lceil}
\newcommand{\rc}{\right\rceil}
\newcommand{\lf}{\left\lfloor}
\newcommand{\rf}{\right\rfloor}
\newcommand{\du}{\,\dot{\cup}\,}
\newcommand{\noi}{\noindent}
\newcommand{\lp}{\!\left(}
\newcommand{\rp}{\right)}
\title{Skew throttling}
\author{Emelie Curl\thanks{Department of Mathematics, Christopher Newport University, 1 Avenue of the Arts, Newport News, VA 23606, emelie.curl@cnu.edu.}\and Jesse Geneson\thanks{Department of Mathematics, Iowa State University, Ames, IA 50011, USA, (geneson,  hogben)@iastate.edu.}\and Leslie Hogben\footnotemark[2]\ \thanks{American Institute of Mathematics, 600 E. Brokaw Road, San Jose, CA 95112, USA, hogben@aimath.org}
}
\begin{document}
\maketitle

\begin{abstract} 
 Zero forcing is a process that colors the vertices of a graph blue by starting with some vertices blue and applying a color change rule. Throttling minimizes the sum of the number of initial blue vertices and the time to color the graph. In this paper, we study throttling for skew zero forcing. We characterize the graphs of order $n$ with skew throttling numbers $1, 2, n-1$, and $n$. We  find the exact  skew throttling numbers of paths, cycles, and balanced spiders with short legs. In addition, we find a sharp lower bound on skew throttling numbers in terms of the diameter.  \end{abstract}
 
\noi {\bf Keywords} skew zero forcing, skew propagation time, skew throttling\ms

\noi{\bf AMS subject classification} 05C57, 05C15, 05C50


\section{Introduction}\label{s:intro}
Zero forcing is a process on graphs in which vertices have two possible colors, blue and white. In each {\em round} (also called a time step), the current blue vertices with only one white neighbor will color that neighbor. An initial set $S$ of blue vertices that eventually colors the whole graph blue is called a \emph{zero forcing set}. For any graph $G$, the minimum possible size of a zero forcing set is called the zero forcing number $\Z(G)$. The zero forcing number of any graph $G$ gives an upper bound for the maximum nullity of the family of symmetric matrices with off-diagonal nonzero  pattern described by the edges of $G$ \cite{AIM08}. Zero forcing has also been applied to quantum systems control \cite{BG,S} and graph searching \cite{Y}. For any initial set $S$ of blue vertices, the number of rounds for the whole graph $G$ to be colored blue is denoted $\pt(G;S)$, the propagation time of $S$. The propagation time of $G$ is the minimum value of $\pt(G;S)$ over all minimum zero forcing sets $S$ \cite{proptime}.

Several other variants of zero forcing have been studied, including {positive semidefinite (PSD) zero forcing} and {skew zero forcing}. In {\em PSD zero forcing}, each blue vertex colors any vertex that is its only white neighbor in a connected component obtained by removing all of the blue vertices. The {\em PSD zero forcing number} $\Zp(G)$ \cite{smallparam} and {\em PSD propagation times} $\ptp(G;S)$ and $\ptp(G)$ \cite{PSDpropTime} are defined analogously to $\Z(G)$, $\pt(G;S)$, and $\pt(G)$. Like $\Z(G)$, $\Zp(G)$ gives an upper bound for the maximum nullity of the family of positive semidefinite matrices corresponding to $G$ \cite{smallparam}. The PSD zero forcing number has also been applied to study the cop versus robber game on trees \cite{CR1}. In {\em skew zero forcing}, every vertex with only one white neighbor colors that neighbor blue in each round. This differs from standard zero forcing in that a white vertex is allowed to color its neighbor. The {\em skew zero forcing number} $\Zs(G)$ \cite{IMA} and {\em propagation times} $\pts(G;S)$ and $\pts(G)$ \cite{King15} are defined in analogy with $\Z(G)$, $\pt(G;S)$, and $\pt(G)$.  As in the case of $\Z(G)$ and $\Zp(G)$, $\Zs(G)$ gives an upper bound for the maximum nullity of the family of skew-symmetric matrices corresponding to $G$ \cite{IMA} and for the maximum nullity of zero-diagonal symmetric matrices corresponding to $G$ (the maximum nullity of weighted adjacency matrices) \cite{mr0}.

For each variant of zero forcing, the propagation time of a graph $G$ is defined only using minimum zero forcing sets, but it is natural to investigate the propagation time for larger sets and to minimize the sum of the number of initially blue vertices and the propagation time of that set.  This is called {\em throttling}.  Throttling minimizes the sum of the resources  and the time needed to accomplish the task.  For a graph $G$  and set  $S\subseteq V(G)$, define $\thz(G;S) = |S| + \pt(G;S)$ and $\thz(G) = \min_{S \subseteq V(G)} \thz(G;S)$. The {\em zero forcing throttling number} $\thz(G)$ was introduced in \cite{BY13throttling}, where a tight lower bound was presented. Throttling numbers $\thp(G)$ and $\thc(G)$ have also been defined analogously for PSD zero forcing in \cite{carlson2019throttling} and the cop versus robber game in \cite{CR1}, where it was proved that $\thp(T) = \thc(T)$ for trees $T$. 

In this paper, we introduce the study of throttling for skew zero forcing. For  a graph $G$ and set $S\subseteq V(G)$,   
define $\ths(G;S) = |S| + \pts(G;S)$ and the {\em skew throttling number} $\ths(G) = \min_{S \subseteq V(G)} \ths(G;S)$. 
For $k\ge \Zs(G)$, it is also convenient to define $\ths(G,k)=\min_{|S|=k} \ths(G;S)$; with this notation, $\ths(G) = \min_{k} \ths(G,k)$. In Section \ref{s:ext}, we characterize the graphs of order $n$ with skew throttling numbers of $1$, $2$, $n-1$ and $n$. In Section \ref{s:path-cycle}, we determine skew throttling numbers for several families of graphs including paths, cycles, and some spiders. We also prove a  lower bound  $\ths(G) = \Omega(\sqrt{d})$ for graphs $G$ of diameter $d$ and minimum degree at least two, and exhibit a family of graphs that achieve this bound.

We define some graph terminology that is used in our results. A \emph{cograph} is a graph that can be generated from $K_1$ using only complementation and disjoint union. Equivalently, a cograph is a graph that does not contain $P_4$ (a path on four vertices)  as an induced subgraph. The \emph{corona} $G_1 \circ G_2$ of $G_1$ with $G_2$ is obtained by making one copy of $G_1$, $|V(G_1)|$ copies of $G_2$, and connecting every vertex in the $i^{th}$ copy of $G_2$ to the $i^{th}$ vertex of $G_1$. A \emph{spider} is a tree with a single vertex of degree at least $3$, which is called the {\em center}. The graph obtained by removing this vertex is a disjoint union of paths. Each of these paths is a leg of the spider, and the length of the leg is the number of vertices in the path. The spider is called \emph{balanced} if all legs have the same length. 


\section{Extreme skew throttling numbers}\label{s:ext}

In this section we characterize graphs with very low or very high skew throttling numbers. Butler and Young \cite{BY13throttling} show that $\lc2\sqrt n -1\rc\le \thz(G)$ for all graphs $G$ of order $n$.  However,
 there are in general no useful bounds  on the skew throttling number in terms of the order of the graph, since we exhibit graphs $G$ of order $n$ with $\ths(G)=1$ and   $\ths(G)=n$ 
and  we show that there are connected graphs $G$ of order $n\ge 3$ with $\ths(G)=2$ and  $\ths(G)=n-1$. 

\subsection{Low skew throttling} 

In this section, we determine graphs having skew throttling number at most two.  

\begin{prop}\label{p:th-1}
For a graph $G$, $\ths(G)=1$ if and only if $G=K_1$ or $G=rK_2$ for $r\ge 1$.  
\end{prop}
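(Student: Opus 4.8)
The plan is to split on the only two ways the sum $|S|+\pts(G;S)$ can equal $1$ for nonnegative integers, namely $(|S|,\pts(G;S))=(1,0)$ or $(0,1)$, and to verify separately that the two claimed families realize these and that nothing else does.

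For the \emph{sufficiency} direction I would first check the two examples. For $G=K_1$, taking $S=V(G)$ (a single vertex) gives $\pts(G;S)=0$ and $\ths(G;S)=1$; note $S=\emptyset$ cannot color the isolated vertex, so including it is forced. For $G=rK_2$, I would start from $S=\emptyset$: in the first round each endpoint of an edge has its partner as its unique white neighbor, so the two endpoints of every edge force each other simultaneously, coloring the whole graph in one round. Hence $\pts(G;\emptyset)=1$ and $\ths(G;\emptyset)=0+1=1$. This uses the defining feature of skew forcing that a white vertex is allowed to force.

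For the \emph{necessity} direction, suppose $\ths(G)=1$ and let $S$ achieve it. In the case $|S|=1$ and $\pts(G;S)=0$, propagation time $0$ means $S=V(G)$, so $G=K_1$. The substantive case is $|S|=0$ and $\pts(G;S)=1$, i.e.\ the empty set colors $G$ in exactly one round. Here I would analyze round $1$ starting from the all-white coloring: a vertex can perform a force only if it has exactly one white neighbor, and since every vertex is white this means it must have degree exactly $1$, in which case it forces its unique neighbor. So the vertices colored in round $1$ are precisely the neighbors of degree-$1$ vertices, and for every vertex to be colored, each vertex must have a degree-$1$ neighbor.

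The key step, and the step I expect to be the main obstacle, is ruling out any vertex of degree $\ge 2$. I would argue as follows: a degree-$1$ vertex $u$ can itself be colored in round $1$ only if its unique neighbor $w$ also has degree $1$ (otherwise $w$ has at least two white neighbors and cannot force $u$), so every degree-$1$ vertex lies in a $K_2$ component. Now if some vertex $x$ had degree $\ge 2$, then to be colored it would need a degree-$1$ neighbor $u$; but $u$'s only neighbor is $x$, which has degree $\ge 2$, so $u$ could never be colored in round $1$, contradicting $\pts(G;\emptyset)=1$. Isolated vertices are likewise impossible, since they can never be forced from $\emptyset$. Thus every vertex has degree exactly $1$, so $G$ is a disjoint union of edges, $G=rK_2$ with $r\ge 1$, completing the characterization.
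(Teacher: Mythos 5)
Your proof is correct and takes essentially the same approach as the paper's: the paper's one-line argument splits $\ths(G)=1$ into the cases $\ths(G,1)=1$ (forcing $G=K_1$) and $\ths(G,0)=1$ (forcing $G=rK_2$), which is exactly your case split $(|S|,\pts(G;S))=(1,0)$ or $(0,1)$. You merely supply the round-one analysis (from the all-white coloring only degree-one vertices can force, so every vertex must lie in a $K_2$ component) that the paper leaves implicit as ``clear.''
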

\bpf If $\ths(G)=1$, then  $\ths(G,1)=1$ or $\ths(G,0)=1$, which imply   $G=K_1$ or $G=rK_2$, respectively. The converse is clear. \epf

\begin{lem}\label{l:tp-02}
A graph $G$ has $\ths(G)=\ths(G,0)=2$ 
if and only if $G=\lp \wh G\circ K_1\rp\du rK_2$ where  $\wh G$ is a  graph of order at least two in which each component of 
$\wh G$ has an edge and $r$ is a nonnegative integer.
\end{lem}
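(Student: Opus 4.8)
The plan is to first collapse the two-part throttling condition into a single statement about propagation from the empty set. Since $S=\emptyset$ gives $\ths(G,0)=|\emptyset|+\pts(G;\emptyset)=\pts(G;\emptyset)$, the hypothesis $\ths(G)=\ths(G,0)=2$ is equivalent to $\pts(G;\emptyset)=2$: if $\pts(G;\emptyset)=2$ then $\ths(G)\le\ths(G,0)=2$, while $\ths(G)\ne 1$ because by Proposition \ref{p:th-1} the only graphs with skew throttling number $1$ are $K_1$ and $rK_2$, which satisfy $\pts(\cdot\,;\emptyset)\in\{0,1\}$. Thus I would restate and prove the lemma in the equivalent form ``$\pts(G;\emptyset)=2$ if and only if $G=(\wh G\circ K_1)\du rK_2$ with $\wh G$ as described.''

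For the forward direction I would simply run the skew process on $G=(\wh G\circ K_1)\du rK_2$ from $S=\emptyset$. In round one the degree-one vertices are exactly the pendants of the corona and the two vertices of each $K_2$; because each component of $\wh G$ has an edge, no vertex of $\wh G$ is itself a leaf, so after round one the blue set is precisely $V(\wh G)$ together with all of $rK_2$, and no pendant has yet been colored. In round two each vertex $v$ of $\wh G$ has its attached pendant as its unique remaining white neighbor and colors it, so $\pts(G;\emptyset)=2$; the order-at-least-two hypothesis guarantees the corona part is genuinely present, making the value $2$ rather than $1$.

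For the converse I would work component by component, using that the propagation time of a disjoint union from $\emptyset$ is the maximum of the component values. A forceable component has no isolated vertex, and a connected component with $\pts=1$ is easily seen to be $K_2$; these give the $rK_2$ summand. So let $H$ be a connected component with $\pts(H;\emptyset)=2$, let $L$ be its leaves, and observe that round one colors exactly $N(L)$. The structural facts I would establish in order are: (a) no two leaves are adjacent (else that edge would be a $K_2$ component), so leaves survive round one and $L\subseteq W_1:=V(H)\setminus N(L)$; (b) since each leaf must be colored in round two and its only neighbor lies in $N(L)$, that neighbor must have the leaf as its \emph{unique} white neighbor, which forces the leaf-to-neighbor map to be a bijection $L\to N(L)$ in which every vertex of $N(L)$ is adjacent to exactly one leaf and to no other white vertex; and (c) the set $M:=W_1\setminus L$ of surviving non-leaf white vertices is empty.

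Step (c) is the main obstacle. I would argue that any $m\in M$ has no leaf neighbor and, by (b), no neighbor in $N(L)$, so all neighbors of $m$ lie in $M$; hence $M$ is a union of whole components disjoint from the nonempty set $N(L)\cup L$, contradicting connectivity unless $M=\emptyset$. Once $W_1=L$ is known, fact (b) shows that $H$ is exactly $\wh H\circ K_1$ for $\wh H:=H[N(L)]$, which is connected (as $H$ is) and of order at least two (order one would give $H=K_2$ with $\pts=1$), hence contains an edge. Taking $\wh G$ to be the disjoint union of the $\wh H_i$ over the $\pts=2$ components and collecting the $K_2$ components as $rK_2$ yields $G=(\wh G\circ K_1)\du rK_2$ with $\wh G$ of order at least two and every component containing an edge, which completes the characterization.
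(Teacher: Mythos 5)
Your proof is correct and takes essentially the same route as the paper's: with $S=\emptyset$, only leaves can force in round one, each leaf must then be forced back by its unique neighbor in round two (making that leaf the neighbor's sole white vertex), which yields the corona structure $G[U]\circ K_1$, while the $K_2$ components account for the $rK_2$ part. The only differences are organizational---you reduce explicitly to $\pts(G;\emptyset)=2$ and argue component by component---and your step (c), showing $M=\emptyset$ via connectivity, spells out a detail the paper leaves implicit when it jumps to ``Thus $G'=G[U]\circ K_1$.''
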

\begin{proof}
Suppose $G=\lp \wh G\circ K_1\rp\du rK_2$, $|V(\wh G)|\ge 2$,   and each component of $\wh G$ has an edge.  
Each leaf  (vertex of degree one) forces its neighbor in the first round.  
Then each vertex in $\wh G$  forces its one leaf neighbor, so $\ths\lp\lp \wh G\circ K_1\rp\du rK_2,0\rp=2$. Since the order of a component of $\wh G\circ K_1$ 
is at least four, $\ths\lp\lp \wh G\circ K_1\rp\du rK_2\rp\ne 1$ by Proposition \ref{p:th-1}.  Thus $\ths\lp\lp \wh G\circ K_1\rp\du rK_2\rp=\ths\lp\lp \wh G\circ K_1\rp\du rK_2,0\rp=2$.

Now assume $\ths(G)=\ths(G,0)=2$.  Let $L$ be the set of leaves of $G$.  With $S=\emptyset$, the vertices in $L$ are the only vertices that can force during the first round. Any $K_2$ component of $G$ is now all blue, but $G$ is not.  Define $G'$ to be the subgraph of components of $G$ that are not entirely blue, $L'$ to be the set of leaves of $G'$, and $U=\{ u:u\in N(\ell) \mbox{ for some } \ell\in L'\}$.  No vertex in $L'$ is blue after the first round because $\deg u\ge 2$ for every $u\in U$. In the next round all vertices in $L$ must be colored blue.  This means that the neighbor $u$ of $\ell$ must force $\ell$ in the second round, so every other neighbor of $u$ must be blue after the first round.  Thus $G'=G[U]\circ K_1$ and $G=G'\du rK_2$.  
\epf

For nonnegative integers $s$ and $t$, 
define $H(s,t)$ to be the graph with  $V(H(s,t))=\{b\}\du\{x_i,y_i: i=1,\dots,s\}\du \{z_i,w_i:i=1,\dots,t\}$ and $E(H(s,t))=\{bx_i, x_iy_i:i=1,\dots,s\}\cup\{bz_i, bw_i, z_iw_i:i=1,\dots,t\}$.

\begin{lem}\label{l:tp-11}
 A graph $G$ has $\ths(G)=\ths(G,1)=2$ if and only if $G=H(s,t)\du rK_2$ for some $r,s,t\ge 0$ with $r+s+t \geq 1$.  In this case, the order of $G$ is odd.
\end{lem}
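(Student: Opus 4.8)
The plan is to prove both implications, handling the forward direction as a direct forcing computation and the reverse direction as a structural analysis of the component containing the single initial vertex.

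For the forward direction, suppose $G = H(s,t) \du rK_2$ with $r+s+t \geq 1$. I would take $S = \{b\}$ and trace one round of skew forcing from the initial coloring: each $x_i$ (degree $2$, adjacent to $b$) forces $y_i$, each leaf $y_i$ forces $x_i$, the vertices $z_i$ and $w_i$ force each other across the triangle, and within every $K_2$ component the two vertices force each other. Hence all of $G$ is colored in one round, giving $\pts(G;\{b\}) = 1$ and $\ths(G,1) \leq 2$. Since $H(s,t)$ for $s+t\ge 1$ is a component on at least three vertices, and $K_1 \du rK_2$ with $r \geq 1$ is neither $K_1$ nor a disjoint union of copies of $K_2$, Proposition~\ref{p:th-1} gives $\ths(G) \neq 1$; combined with $\ths(G) \leq \ths(G,1) \leq 2$ this yields $\ths(G) = \ths(G,1) = 2$. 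The order $1 + 2s + 2t + 2r$ is odd.

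For the reverse direction, assume $\ths(G,1) = 2$, so there is a vertex $v$ with $\pts(G;\{v\}) = 1$: the whole graph is colored in a single round from $\{v\}$, meaning every vertex other than $v$ must be the unique white neighbor of some forcer with respect to the initial coloring (no chaining within a round is permitted). I would first show that every component not containing $v$ is a $K_2$: in such a component all vertices are white, only leaves can force, and a leaf can be forced only by an adjacent leaf, which pins the component down to a single edge. Thus $G = C \du rK_2$, where $C$ is the component of $v$. When $\deg v \leq 1$ a short direct check gives $C \in \{K_1, K_2, P_3\}$, with $P_3 = H(1,0)$. When $\deg v \geq 2$ I would analyze $C$ via the repeated principle that \emph{every forcer must itself be colored, hence itself be forced}. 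This principle shows first that every vertex lies within distance two of $v$ (a leaf at the far end of a longer path could never be forced), then that every neighbor of $v$ has degree exactly $2$ (a neighbor of degree $\geq 3$ could only be forced by a degree-$2$ triangle vertex or by a pendant leaf, and in either case that forcer is then left with no legal forcer of its own), and finally that every distance-two vertex is a leaf. These constraints force $C$ to decompose into triangles and pendant paths of length two meeting only at $v$, i.e. $C \cong H(s,t)$.

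Finally I would invoke the second hypothesis $\ths(G) = 2$ to conclude. If $C = K_2$ then $G = (r+1)K_2$ and $\ths(G) = 1$ by Proposition~\ref{p:th-1}, a contradiction, so this degenerate case is excluded; if $C = K_1 = H(0,0)$ then $G = K_1 \du rK_2$ and $\ths(G) \neq 1$ forces $r \geq 1$. In every surviving case $G = H(s,t) \du rK_2$ with $r + s + t \geq 1$, and its order is odd. I expect the main obstacle to be the reverse-direction degree analysis of $C$: the delicate part is that each candidate forcer of a high-degree or far-away vertex is itself trapped with no admissible forcer, and turning this ``the forcer must be forced'' cascade into a fully rigorous case analysis---rather than a merely plausible one---is where the real work lies.
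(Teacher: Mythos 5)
Your proposal is correct, and it follows the same overall skeleton as the paper's proof (direct forward verification; components not containing $v$ are $K_2$; small cases $K_1$, $K_2$, $P_3$; the distance-two bound; ruling out $\deg v=1$ in large components; the ``forcer must itself be forced'' principle), but it finishes differently. After establishing that all vertices lie within distance two of $b$ and that $\deg b\ge 2$, the paper observes that $b$ then has at least two white neighbors and so can never force in the single round, whence forcing in $\wt G$ from $\{b\}$ coincides with forcing in $\wt G-b$ from the empty set; this gives $\ths(\wt G-b,0)=1$, so $\wt G-b=qK_2$ by the one-round self-forcing characterization underlying Proposition~\ref{p:th-1}, and the copies of $K_2$ are then sorted into $\{x_i,y_i\}$ or $\{z_i,w_i\}$ pairs according to whether one or two edges join them to $b$. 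You instead re-derive the structure locally: every neighbor of $v$ has degree exactly two and every distance-two vertex is a leaf, then assemble triangles and pendant $P_2$'s at $v$. Both work---I checked that your cascade closes in each case (a degree-one neighbor of $v$ is unforceable since $v$ cannot force; the only candidate forcers of a degree-$\ge 3$ neighbor, namely a degree-two common neighbor or a pendant leaf, are themselves left unforceable; and a non-leaf distance-two vertex leaves its degree-two forcer in $N(v)$ with no forcer of its own, which also eliminates configurations like $C_4$). The paper's deletion trick buys brevity by reusing the $rK_2$ characterization already in hand, collapsing your entire degree analysis into one line; your version is more self-contained but carries the case analysis you rightly flag as the real work. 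Your treatment of the degenerate cases ($C=K_2$ forcing $\ths(G)=1$, and $C=K_1$ forcing $r\ge 1$) matches the paper's.
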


\begin{proof}
It is straightforward to verify that $\ths(H(s,t)\du rK_2,\{b\})=2$ for $r+s+t \geq 1$.  By Proposition \ref{p:th-1}, $\ths(H(s,t)\du rK_2)\ge 2$ when $r+s+t \geq 1$.

Assume that $\ths(G) = 2$ and that $G$ can be skew-throttled in one round with one initial blue vertex $b$. Let $\wt G$ be the connected component containing $b$.  Since $K_2$ is the only connected graph that can force itself in one round with no blue vertices, $G=\wt G\du rK_2$ for some $r\ge 0$.  If $|V(\wt G)|=1$, then $\ths(G) = 2$ implies $r\ge 1$ and $G=H(0,0)\du rK_2$.  It is not possible to have $|V(\wt G)|=2$, because this would imply $G=(r+1)K_2$ and $\ths(G)=1$.  If $|V(\wt G)|=3$, then $\wt G=P_3=H(1,0)$ or $\wt G=K_3=H(0,1)$. 

So assume  $|V(\wt G)| \geq 4$. 
Let $v$ be a vertex at maximum distance from $b$ in $\wt G$. If $\dist(b,v)\ge 3$, then for any neighbor $u$ of $v$, $\deg u\ge 2$ and $b$ is not a neighbor of $u$. Thus, $\dist(b,v)\ge 3$ implies $v$ cannot be colored blue in the first round. So no vertex in $\wt G$ is at distance more than  two from $b$. Since $|V(\wt G)| \geq 4$, this implies $\deg b\ge 2$ and $b$ cannot perform a force in the first round. Indeed, if we had $\deg b = 1$, then the only neighbor $u$ of $b$ is a universal vertex in $\wt G$. Since $u$ has at least two neighbors besides $b$, no neighbor of $u$ other than $b$ would ever get colored, so we conclude that $\deg b \neq 1$.
Since $b$ cannot force, forcing in $\wt G$ is the same as forcing in $\wt G-b$.  Thus $\ths(\wt G-b,0)=1$, so $\wt G-b=q K_2$.  For a $K_2$ that has one edge between it and $b$, we designate its vertices as $x_i,y_i$ whereas  a $K_2$ that has two edges between it and $b$ has its vertices designated as $z_i,w_i$; with the vertices labeled this way we have identified $\wt G$ as some $H(s,t)$ with $s+t\ge 2$.  \end{proof}

Since $\ths(G)=2$ implies one of $\ths(G,0)=2$, $\ths(G,1)=2$, or $\ths(G,2)=2$, the next result follows from Lemmas \ref{l:tp-02} and  \ref{l:tp-11} and the observation that $\ths(G,k)=k$ implies $|V(G)|=k$.

\begin{thm}\label{t:ths2}  A graph $G$ has $\ths(G)=2$ if and only if $G$ is one of $2K_1$, $H(s,t)\du rK_2$ with $r+s+t \geq 1$, or $\lp \wh G\circ K_1\rp\du rK_2$  where each component of $ \wh G$ has an edge and $ \wh G$ has order at least two.
\end{thm}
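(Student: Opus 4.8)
The plan is to obtain the theorem as a direct corollary of Proposition \ref{p:th-1} and Lemmas \ref{l:tp-02} and \ref{l:tp-11}, organized by a short case analysis on the size of an optimal throttling set. The starting observation is that $\ths(G)=\min_k \ths(G,k)$, so $\ths(G)=2$ forces $\ths(G,k)=2$ for some $k$. Since $\ths(G,k)=|S|+\pts(G;S)\ge |S|=k$ for the set $S$ attaining this minimum, the relevant $k$ must satisfy $k\le 2$, and I would therefore only need to treat $k\in\{0,1,2\}$.

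For the (only if) direction I would split along these three values. When $\ths(G,0)=2$, the hypotheses of Lemma \ref{l:tp-02} hold and deliver $G=\lp\wh G\circ K_1\rp\du rK_2$ with each component of $\wh G$ carrying an edge and $|V(\wh G)|\ge 2$. When $\ths(G,1)=2$, Lemma \ref{l:tp-11} applies and yields $G=H(s,t)\du rK_2$ with $r+s+t\ge 1$. The case $\ths(G,2)=2$ is where I would invoke the elementary fact that $\ths(G,k)=k$ implies $|V(G)|=k$ (propagation time zero means every vertex is already blue, so necessarily $S=V(G)$): here it forces $|V(G)|=2$, leaving only $K_2$ and $2K_1$, and since $\ths(K_2)=1$ by Proposition \ref{p:th-1}, I conclude $G=2K_1$.

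For the (if) direction, the corona and $H(s,t)$ families are immediate from the same two lemmas, which assert $\ths=2$ for exactly those graphs; the graph $2K_1$ I would verify by hand, noting that both vertices are isolated, so with fewer than two initial blue vertices the graph never becomes fully blue, whence $\ths(2K_1)=2$.

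I do not anticipate a genuine obstacle, since Lemmas \ref{l:tp-02} and \ref{l:tp-11} carry all of the structural content. The one point meriting care is confirming that the three listed families are mutually exclusive and jointly exhaustive; in particular, $2K_1$ genuinely escapes the family $H(s,t)\du rK_2$, because the latter contains at most a single isolated vertex (arising only from $H(0,0)=K_1$), which is precisely why $2K_1$ must appear as a separate item in the statement.
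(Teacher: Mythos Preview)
Your proposal is correct and follows essentially the same approach as the paper: the paper's proof is a one-sentence remark that the result follows from Lemmas~\ref{l:tp-02} and~\ref{l:tp-11} together with the observation that $\ths(G,k)=k$ implies $|V(G)|=k$, and you have simply spelled out that case analysis in detail. Your closing remark on mutual exclusivity is not needed for the biconditional (overlap among the listed families would be harmless), but it does no harm either.
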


By considering $G=C_m\circ K_1$ we see that $\ths(G)=2$ can be achieved by a graph of arbitrarily large order with maximum degree three, unlike the case of PSD throttling \cite{carlson2019throttling}.   Theorem \ref{t:ths2} also implies that graphs in a well-known family  have  skew throttling number equal to two: 
For $n\ge 1$, the {\em friendship graph} $F_n$ is the planar graph with $2n+1$ vertices and $3n$ edges  constructed by joining a universal vertex to $n$ disjoint copies of  $K_2$; $F_2$ is shown in \ref{f:F2}.  That is, $F_n = H(0, n)$, so $\ths(F_n)=2$ by Lemma \ref{l:tp-11}.  The proof of Lemma \ref{l:tp-11} also showed $\Zs(F_n) = 1$ and $\pts(F_n) = 1$.\vspace{-5pt}

\begin{figure}[h]
\centering
\includegraphics[width=0.3\textwidth]{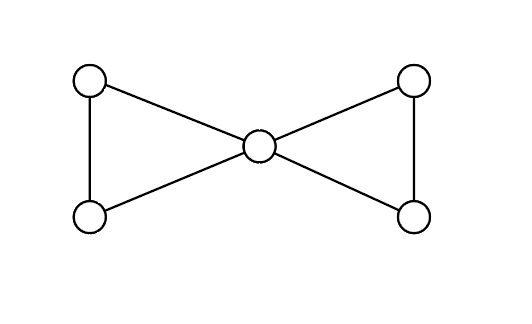}\vspace{-15pt}
\caption{The friendship graph $F_2$\label{f:F2}}

\end{figure}

\subsection{High skew throttling} 

We now turn to graphs with high skew throttling number. For graphs $G$ with all vertices isolated, it is clear that $\ths(G)=n$. In the next result, we prove an upper bound on skew throttling number for graphs that have an edge. 

\begin{prop}\label{p:ths=n-1} Let $G$ be a graph of order $n$.  If $G$ has an edge, then $\ths(G)\le n-1$.  Thus $\ths(G)=n$ if and only if $G=rK_1$.  
\end{prop}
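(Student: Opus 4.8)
The plan is to prove the contrapositive-style structure directly: first establish the upper bound $\ths(G)\le n-1$ whenever $G$ has an edge, and then combine this with the trivial observation that an edgeless graph forces $\ths(G)=n$. For the upper bound, I would exhibit a single throttling set $S$ achieving $\ths(G;S)\le n-1$. The natural candidate is to take $S$ to be all vertices except one endpoint of a chosen edge. Concretely, fix an edge $uv$ and let $S=V(G)\setminus\{v\}$, so $|S|=n-1$. Then in the very first round, $u$ is a blue vertex with exactly one white neighbor, namely $v$ (every other neighbor of $u$ is already blue since only $v$ is white), so $u$ forces $v$ and the whole graph is blue after one round. This gives $\pts(G;S)=1$ and hence $\ths(G;S)=(n-1)+1=n$, which is not quite what we want.

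To actually get $n-1$, I would leave \emph{two} vertices white and exploit the skew rule's ability to force in parallel or in sequence within the same or subsequent rounds. The cleaner approach is to choose $S=V(G)\setminus\{u,v\}$ where $uv$ is an edge, giving $|S|=n-2$. After removing $u$ and $v$, the vertex $u$ has all neighbors blue except possibly $v$; if $u$ and $v$ are each other's only white neighbors among their neighborhoods, a little care is needed, but since $uv\in E(G)$, consider whether some blue vertex adjacent to exactly one of $u,v$ can force. If $u$ has a blue neighbor $w$ whose only white neighbor is $u$, then $w$ forces $u$ in round one, after which $u$ (now blue) forces $v$ in round two, yielding $\pts(G;S)\le 2$ and $\ths(G;S)\le(n-2)+2=n$, still only $n$. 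So the sharp bound $n-1$ requires finding a set of size $n-2$ that finishes in one round, or a set of size $n-1$ that finishes in time zero — the latter is impossible unless $G$ is already covered by the low-throttling cases, so the real content is producing a size-$(n-2)$ set with propagation time $1$.

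The key idea I would therefore pursue: pick an edge $uv$ and set $S=V(G)\setminus\{u,v\}$; I claim that in round one, \emph{both} $u$ and $v$ get forced simultaneously. This happens precisely when $u$ has a blue neighbor (forcing $u$) and $v$ has a blue neighbor (forcing $v$) with the skew rule applied to all vertices at once. In the skew model every vertex with a unique white neighbor forces, so if $u$ has any neighbor $w\neq v$, then among $w$'s neighbors only $u$ and possibly $v$ are white; I would choose the edge $uv$ so that $u$ and $v$ do not share the situation of being mutually dependent. The robust choice is to select $u$ to be a vertex of minimum positive degree on an edge and let $v$ be a neighbor; then carefully argue that either a third vertex forces $v$ (allowing $u$ to be recovered by $v$ itself is not needed), or $u$ and $v$ each have an external blue neighbor forcing them in the first round. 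I would split into cases according to whether $v$ has a neighbor other than $u$.

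The main obstacle will be the degenerate case where the edge $uv$ is an isolated $K_2$ component (so $u,v$ have degree one and no other neighbors). In that situation removing both $u$ and $v$ gives a size-$(n-2)$ set that can never recolor the isolated $K_2$, so the naive argument fails. I would handle this by instead leaving only one vertex of such a $K_2$ white: take $S=V(G)\setminus\{v\}$ for a degree-one endpoint $v$ of the $K_2$, so that $u$ forces $v$ in round one, giving $\ths(G;S)=(n-1)+1=n$ — again only $n$, which signals that when $G$ is a disjoint union containing $K_2$'s the sharp value genuinely is governed by the structure. I expect the clean resolution is to observe that since $G$ has an edge, either some component is not a single $K_2$ (where a size-$(n-2)$ one-round set exists by the parallel-forcing argument above), or $G$ consists only of isolated $K_2$'s and isolated vertices, a case where $\ths(G)\le n-1$ can be checked directly by leaving one full $K_2$ to self-force from the empty set on that component. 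Consolidating these cases to uniformly reach $n-1$, and verifying $\ths(G)=n\iff G=rK_1$ by noting no vertex can ever force in an edgeless graph, will complete the proof.
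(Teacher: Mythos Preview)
Your proposal overlooks the very feature of skew zero forcing that makes this proposition a one-liner: white vertices may force.  You state the rule correctly (``every vertex with a unique white neighbor forces'') but then only look for \emph{blue} vertices $w$ to carry out the force on $u$ or $v$.  Apply the rule to $u$ and $v$ themselves.  With $S=V(G)\setminus\{u,v\}$, the vertex $u$ (white) has exactly one white neighbor, namely $v$, so $u$ forces $v$ in round one; symmetrically $v$ forces $u$.  Hence $\pts(G;S)=1$ and $\ths(G;S)=(n-2)+1=n-1$.  That is the paper's entire proof.

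In particular, your ``main obstacle'' --- the case where $uv$ is an isolated $K_2$ component --- is not an obstacle at all: with both $u$ and $v$ white, each is the other's unique white neighbor and they color each other in a single round.  (Indeed, this is why $\ths(rK_2)=1$ from the empty set.)  All of the case analysis you propose --- choosing $u$ of minimum degree, splitting on whether $v$ has another neighbor, treating unions of $K_2$'s separately --- is unnecessary once you let the two white endpoints force each other.
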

\bpf  If $G$  has an edge $uv$, then  $\pts(G;V(G)\setminus \{u,v\})=1$, which implies $\ths(G)\le\ths(G;V(G)\setminus \{u,v\})= n-1$.  Thus $\ths(G)=n$ if and only if $G=rK_1$.  
\epf

\begin{rem}\label{r:small-pt} Let $G$ be a graph of order $n$ that has an edge, so $\ths(G)\le n-1$.  This implies $\pts(G;S)\ge 1$ for any set $S$ such that $\ths(G;S)=\ths(G)$, which then implies $|S|\le n-2$ for any such $S$. Furthermore, it is straightforward to see that $\pts(G)\le 2$ implies $\ths(G)=\Zs(G)+\pts(G)$: This is immediate for $\pts(G)=1$.  In the case $\pts(G)= 2$, it is not possible to improve  throttling  by adding one to the zero forcing set.
\end{rem}

Let  $G$ be a cograph.  The {\em $\cup-\vee$ decomposition tree} $T_G$ of $G$ is a rooted binary tree such that the vertices of $G$ are the leaves of $T_G$, each non-leaf vertex is labeled either $\cup$ or $\vee$,  
where $\cup$ represents disjoint union and $\vee$ represents join.  
For a non-leaf vertex $x$ of $T_G$, the branches  at $x$ are the two connected components of $T_G$ induced by  the descendants of $x$. 
If $y$ is a vertex of $G$ (and a leaf of $T_G$), define $G_y=G[\{y\}]$.   For $x$ a non-leaf vertex of $T_G$, define $G_x$ to be the subgraph of $G$ induced by the leaves of $T_G$ that are descendants of $x$. Observe that $G_x$ can be obtained by applying the operation in the label of $x$ to $G_y$ and $G_z$, where $y$ and $z$ denote the immediate descendants of $x$, and $G=G_r$ where $r$ is the root of $T_G$.

If $G$ is a cograph with no induced $2K_2$, then every $\cup$ vertex in the $\cup-\vee$ decomposition of $G$ has a branch with no $\vee$, since otherwise each of the disjoint subgraphs of $G$ induced by the descendants of the $\cup$ vertex would have a  $K_2$.

\begin{thm}\label{t:high-th}
For a graph $G$ of order $n$, $\ths(G) = n-1$ if and only if $G$ is a cograph with no induced $2K_2$ and at least one edge.
\end{thm}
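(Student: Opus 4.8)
The plan is to prove both directions by reducing everything to a single propagation-time bound: for graphs in the class $\mathcal{C}$ of cographs with no induced $2K_2$ (equivalently, $\{P_4,2K_2\}$-free graphs), skew forcing is so inefficient that a white set of size $w$ needs at least $w-1$ rounds to be colored, i.e. $\pts(G;S)\ge (n-|S|)-1$ for every $S$. Granting this, if $G\in\mathcal{C}$ has an edge then every set satisfies $\ths(G;S)=|S|+\pts(G;S)\ge |S|+(n-|S|)-1=n-1$, while Proposition \ref{p:ths=n-1} gives $\ths(G)\le n-1$; hence $\ths(G)=n-1$. For the converse I argue contrapositively: a graph with an edge that is \emph{not} in $\mathcal{C}$ is either not a cograph (so it has an induced $P_4$) or has an induced $2K_2$, and in either case I exhibit an explicit throttling set of value at most $n-2$.

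The explicit sets are the easy direction. Color every vertex blue except the four vertices of an induced $P_4$ on $a\text{-}b\text{-}c\text{-}d$ (respectively an induced $2K_2$ on edges $ab,cd$). In the $P_4$ case, in the first round $a$ forces $b$ and $d$ forces $c$ (each has a unique white neighbor), and in the second round the now-blue $b,c$ force the remaining $a,d$, since $ac,bd,ad$ are nonedges; thus $\pts\le 2$ and $\ths(G)\le(n-4)+2=n-2$. In the $2K_2$ case the four vertices force each other in a single round, giving $\ths(G)\le n-3$. Any additional forces coming from the blue part only speed things up, so these bounds are valid regardless of the rest of $G$.

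The core is the propagation bound, which I would prove by induction on $|V(G)|$ along the $\cup$--$\vee$ decomposition tree, in the slightly stronger form that also allows one extra blue vertex adjacent to all of $V(G)$; this strengthening is needed because in a join, once one side is entirely blue it acts on the other side exactly like a single universal blue vertex. The disjoint-union step $G=G'\cup K_1$ is routine: an isolated vertex is inert and is colored last, costing one extra round but also adding one to $w$. For the join step $G=A\vee B$ (with $A,B\in\mathcal{C}$ strictly smaller) the key structural observations are: (1) if both sides contain at least two white vertices then every vertex has at least two white neighbors and the process stalls, so at all times at least one side has at most one white vertex, and in particular $\min(w_A,w_B)\le 1$ initially; and (2) the white counts $\alpha,\beta$ of $A,B$ are nonincreasing, and $\alpha$ can drop below its current value while it is $\ge 2$ only by an internal force inside $A$, which requires $\beta=0$. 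Assuming $w_B\le 1$: if $w_B=0$ then $B$ plays the role of the external blue apex and the whole process is skew forcing on $A$ (with apex), so induction on $A$ gives $\pts\ge w_A-1=w-1$; if $w_B=1$ and $w_A\ge 2$, the first round can color only the unique white vertex of $B$ (no vertex of $A$ can be forced while $\alpha\ge 2$), after which $B$ is empty and the previous case applied to $A$ costs at least $w_A-1$ further rounds, for a total of at least $w_A=w-1$; and the remaining cases have $w\le 2$, where the bound is immediate.

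The main obstacle is the join step, and specifically making observation (2) precise enough to conclude that the bulk of the forcing happens on only one side and that the other side is cleared essentially for free, in a single round, so that the ``$-1$'' can never be improved to ``$-2$''. Setting up the induction with the external-apex strengthening is exactly what lets this close, since it allows me to treat an all-blue side uniformly as an apex and recurse on the strictly smaller factors $A$ and $B$. Once $\pts(G;S)\ge (n-|S|)-1$ is established for all $G\in\mathcal{C}$, the theorem follows by combining it with Proposition \ref{p:ths=n-1} and the explicit constructions above.
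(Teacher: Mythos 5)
Your proposal is correct, and although it inducts over the same cograph $\cup$--$\vee$ decomposition as the paper (and your easy direction is identical to the paper's), the key inductive lemma is genuinely different. The paper inducts on the throttling value itself: in the join case it takes an \emph{optimal} set $S$ and augments it twice ($S\to S'\to S''$), absorbing the at most two vertices that receive cross forces --- using that the first cross force occurs in round one and a cross force in the other direction must be the last force of the whole process, each trade adding one to $|S|$ while subtracting one from the propagation time --- so that no vertex of $G_y$ ever forces and $\ths(G)=|G_y|+|Z|+\pts(G_z,Z)=n-1$ by induction. You instead prove the pointwise bound $\pts(G;S)\ge(n-|S|)-1$ for \emph{every} set $S$ (which is literally equivalent to the hard direction once Proposition \ref{p:ths=n-1} supplies the matching upper bound), strengthened by allowing a universal blue apex; the apex does exactly the job of the paper's set augmentations, since an all-blue join factor acts on the other factor as a single universal blue vertex, and your join analysis is sound: if both factors have at least two white vertices the process stalls, and when $w_B=1$ and $w_A\ge 2$ the only possible round-one force targets the lone white of $B$, because every vertex of $A$ still has that vertex as a white neighbor, so a total of at least $1+(w_A-1)=w-1$ rounds results. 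Your route buys a uniform inequality with no reasoning about optimal sets or identification of the last force; the paper's route avoids generalizing the induction statement. Two elisions in your write-up do close but deserve a line each: first, reducing the union step to $G'\du K_1$ is justified because in a $2K_2$-free graph every $\cup$-branch is edgeless (components are connected, so all but one component is a $K_1$) --- this is precisely the paper's observation about the decomposition tree; second, in the apexed union step, a white isolated vertex of $G$ can be forced only by the apex, and only in a strictly later, final round when it is the unique remaining white vertex, while its presence merely removes forces from the run on $G'$ (skew forcing is monotone in the blue set), so the $G'$-whites still cost at least $w'-1$ rounds by the induction hypothesis.
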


\begin{proof}
Let $G$ be a graph of order $n$.  We first establish that $\ths(G)\ne n-1$ if $G$ has no edges, or if $G$  has an induced $P_4$ or $2K_2$. If $G$ has no edges, then $\ths(G) = n$. If $G$ has an induced $P_4$, then let $S$  consist of all vertices except those in an induced $P_4$, so $\pts(G;S)= 2$ and $\ths(G)\le n-2$. If $G$ has an induced $2K_2$, then let $S'$  consist of all vertices except those in an induced $2K_2$, so $\pts(G;S')= 1$ and $\ths(G)\le n-3$. 

Next, we prove by induction on the order of the graph that every cograph $G$ of order $n$ with no induced $2K_2$  has throttling number $n$ if $G$ has no edges and $n-1$ if $G$ has at least one edge. 
Clearly the statement is true for graphs of order $1$, since there are no edges and the throttling number is $1$.
The induction hypothesis is that every cograph of order $k<n$ with no induced $2K_2$ must have throttling number $k-1$ if it has an edge and $k$ if it has no edge. Let $G$ be a cograph of order $n>1$ with no induced $2K_2$.

Let  $x$ be the root of $T_G$ (so $G_x=G$), and  denote the descendants of $x$ by $y$ and $z$.  The induction hypothesis applies to $G_y$ and $G_z$ since each has order less than $n$.  

Suppose first that $x$ is labeled with $\cup$.  Then at least one of the branches of $x$, say the one that contains $y$, has no $\vee$, so $G_y$ consists of isolated vertices. If neither branch has a $\vee$, then $G_x$ consists of isolated vertices and $\ths(G_x) = |G_x|$. Suppose  the branch at $z$ has a $\vee$, so $G_z$ has an edge. Any zero forcing set of $G_x$ must consist of a zero forcing set for $G_z$ along with every vertex in $G_y$. Thus \[\ths(G_x) = \ths(G_z)+|G_y| =|G_z|-1+|G_y| = |G_x|-1.\]

Now suppose $x$ is labeled with $\vee$.  Let $S$ be a set of vertices such that $\ths(G_x) = \ths(G_x, S)$. 
The number of white vertices (i.e., vertices not in $S$) is at least $2$ by Remark \ref{r:small-pt}. 
No vertex in $G_y$ can force any other vertex in $G_y$ until every vertex in $G_z$ is blue, and vice versa. Moreover, no vertex in $G_y$ can force any vertex in $G_z$ until all but one vertex in $G_z$ is blue, and vice versa. For the zero forcing process to start, $G_y$ or $G_z$ must initially have at most one white vertex. If each of $G_y$ and $G_z$ has exactly one white vertex at the start, then $\pts(G_x,S)=1$, and $\ths(G_x) = |G_x|-1$. 

If initially $G_y$ has one white vertex and $G_z$ has more than one white vertex, then no vertex in $G_z$ can be colored blue in the first round: Every vertex in $G_y$ has at least two white neighbors in $G_z$ and a vertex in $G_z$ that has a white neighbor in $G_z$ has at least two white neighbors including one in $G_y$. So in the first round there is exactly one force $u\to w$ where $u\in G_z$ and $w\in G_y$, and all vertices of $G_y$ are blue after the first round. Thus the initial set $S$ has the same throttling number as $S'= S\cup \{w\}$, since throttling with $S'$ adds one to the size of the zero forcing set but subtracts one from the propagation time.  Thus we  replace $S$ by $S'$ for the rest of the proof; observe that $S' \cap V(G_z)=S\cap V(G_z)$.  

If $G_z$ had only isolated vertices, then all but one vertex in $G_z$ would have to be in $S'$, or else no vertex in $G_y$ could color any vertex in $G_z$. However, we assumed that  $S'$ omits  more than one white vertex of $G_z$, so $G_z$ must have an edge.  Since $G_z$ has an edge, $\ths(G_z) =  |G_z|-1$ by induction hypothesis. 
Suppose that as $S'$ colors all the vertices blue, a vertex $v$ in $G_y$  forces a vertex $w'$ of $G_z$. Necessarily $v\to w'$ is the last force, and this force is the only force in the last round.  Thus  $S'$ has the same throttling number as $S''=S'\cup \{w'\}$, since throttling with $S''$ adds one to the size of the zero forcing set but subtracts one from the propagation time. In this case,  we  replace $S'$ by $S''$ for the rest of the proof (if this case does not apply, let $S''=S'$).  Define $Z=S''\setminus V(G_y)$.   Using $S''$, no vertex of $G_y$ performs a force, so $Z$ is a skew zero forcing set for $G_z$.  Thus $\pts(G;S'')=\pts(G_z,Z)$ and 
\[\ths(G)=\ths(G;S'')=|G_y|+|Z|+\pts(G_z,Z)=n-1.\qedhere\]
\end{proof}

It follows from Theorem \ref{t:high-th} that the complete multipartite graph $K_{n_1,n_2,\ldots,n_s}$ with $s\ge 2$ and  $n:=n_1+n_2+\dots+n_s$ is example of a graph  with $\ths(K_{n_1,n_2,\ldots,n_s}) = n-1$; this also follows from results of Kingsley, who showed in \cite{King15} that $\Zs(K_{n_1,n_2,\ldots,n_s})=n-2$ and $\pts(K_{n_1,n_2,\ldots,n_s})=1$.


\section{Skew throttling numbers of families of graphs}\label{s:path-cycle}

In this section we determine the skew throttling numbers of hypercube graphs, paths, cycles, and balanced spiders with a constant number of legs. We also bound the maximum and minimum skew throttling numbers of trees of order $n$ up to a constant factor, as well as trees of a given maximum degree. 

Just as connected graphs of order $n$ have skew throttling numbers ranging between $2$ and $n-1$, the same is true of trees of order $n$. The maximum is at most $n-1$ by Proposition \ref{p:ths=n-1}, and is $n-1$ if $T$ is a star. The minimum is at least $2$ by Proposition \ref{p:th-1}, and is $2$ if $T = T'\circ K_1$ for some tree $T'$. Even if we restrict the tree to have maximum degree $d$, there are still trees with $\Omega(n)$ skew throttling numbers, e.g., when $T$ is obtained from a tree of maximum degree $d-2$ by adding two leaves to every vertex.  Although the skew throttling number  of the star is close to the (standard) throttling number, the skew throttling numbers  of paths and cycles behave more like the PSD throttling numbers of those graphs.  We begin with cycles and paths.

\begin{prop}\label{cycle}
For all $n \geq 3$, $\ths(C_n) = \lc \sqrt{2n}-\frac{1}{2} \rc$. 
\end{prop}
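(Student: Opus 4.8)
The plan is to prove matching upper and lower bounds and then reconcile the two through an integer optimization. The engine behind both is a description of how a maximal white arc (gap) between blue vertices fills under skew forcing. First I would establish the basic dynamics: if two adjacent blue vertices sit at one end of a white arc, they emit a \emph{clean} forcing wave advancing two vertices per round (the block endpoint forces the first white vertex, which in turn forces the second, and so on), whereas an isolated blue vertex with two white neighbors cannot force at all in the first round and produces only a slower, parity-defective wave that leaves a hole adjacent to it. Consequently a gap of length $g$ bounded on both sides by blocks of at least two consecutive blue vertices is colored in exactly $\lc g/4\rc$ rounds, filling two vertices per round from each end.

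For the upper bound I would place the vertices of $S$ as $m$ pairs of adjacent vertices, spaced so that the $m$ resulting gaps each have length at most $4t$ for a chosen target time $t$; then $\pts(C_n;S)\le t$ and $\ths(C_n;S)\le 2m+t$. Covering the cycle requires $n\le 2m(2t+1)$, so for each $t$ the cost is essentially $\frac{n}{2t+1}+t$, which is minimized near $t=\frac{\sqrt{2n}-1}{2}$ with value $\sqrt{2n}-\tfrac12$. I would then pick integers $k$ and $t$ with $k+t=\lc\sqrt{2n}-\tfrac12\rc$ and $k(2t+1)\ge n$ (such a pair exists precisely because $(2L+1)^2\ge 8n$ for $L:=\lc\sqrt{2n}-\tfrac12\rc$), build the explicit pair configuration on $k$ vertices, and conclude $\ths(C_n)\le k+t=\lc\sqrt{2n}-\tfrac12\rc$.

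For the lower bound the key claim is that for every skew forcing set $S$ the blue set grows by at most $2|S|$ vertices per round, so that $n\le |S|\,(1+2\,\pts(C_n;S))$ and hence $\pts(C_n;S)\ge\lc\frac{n-|S|}{2|S|}\rc$. I would prove this increment bound by charging each newly colored vertex to a forcing front emanating from an endpoint of an initial blue block: a block of size at least two contributes two fronts, each coloring at most two vertices per round, while a singleton block fuels at most two colorings per round in total through its two defective half-waves. Minimizing $|S|+\lc\frac{n-|S|}{2|S|}\rc$ over $|S|$ then gives $\ths(C_n)\ge\lc\sqrt{2n}-\tfrac12\rc$, matching the construction.

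The hard part will be making the lower-bound increment estimate rigorous for \emph{every} round, not just the first: as gaps close, blue blocks merge, singletons can become activated and begin pushing, and two fronts can collide inside a gap, so I must check that the charging scheme never assigns more than two colorings per round to any one initial blue vertex throughout the process (amortizing the extra force of a just-activated singleton is the subtle case). A secondary nuisance is the integer bookkeeping in both directions — the nested rounding in $\lc g/4\rc$ and the parity of $|S|$ (odd $|S|$ forces one block of size three or a leftover singleton) — which must be handled so that the real-variable optimum $\sqrt{2n}-\tfrac12$ sharpens to the stated ceiling for every $n\ge 3$.
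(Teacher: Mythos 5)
Your lower-bound plan coincides with the paper's: the paper simply cites the PSD-throttling argument of Carlson et al., which on a cycle gives $n \le |S|\bigl(2\pts(C_n;S)+1\bigr)$ and hence $\ths(C_n)\ge\lc\sqrt{2n}-\frac{1}{2}\rc$, and your increment claim is the same inequality. You are right that in the skew setting the per-round bound of $2|S|$ is not verbatim the PSD argument (white arcs split as waves leave holes, new blue components appear, and a saturated blue endpoint can eat two vertices of an arc in one round via $b\to w_1$ together with $w_1\to w_2$), so some amortized charging is needed; the paper glosses this by citation, so on this half you are at worst no less rigorous than the paper. The genuine gap is in your upper bound. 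Restricting $S$ to adjacent pairs forces $|S|=2m$ to be even, and your own covering condition then requires even $b=2m$ and integer $t$ with $b+t=L:=\lc\sqrt{2n}-\frac{1}{2}\rc$ and $b(2t+1)\ge n$. This fails for infinitely many $n$: for $n=15$ one has $L=5$, the only options $(b,t)\in\{(2,3),(4,1)\}$ cover at most $2\cdot 7=14<15$, so every pairs configuration costs at least $6$, whereas $\ths(C_{15})=5$ (three singleton seeds with gaps of length $4$, colored in $2$ rounds); similarly for $n=45$, where $L=9$ and $\max\{2\cdot 15,\,4\cdot 11,\,6\cdot 7,\,8\cdot 3\}=44<45$. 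Your parenthetical existence claim — that $k+t=L$ with $k(2t+1)\ge n$ follows from $(2L+1)^2\ge 8n$ — is correct only if $k$ may be odd (for $n=15$ it is $k=3$, $t=2$), and an odd $k$ cannot be realized by pairs.

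The repair is exactly the paper's construction, and it rests on a feature of skew forcing that your opening dynamics analysis undervalues: a \emph{white} neighbor of a blue vertex may force. A singleton seed flanked by even gaps is therefore not merely a ``slower, parity-defective wave'': its two waves fill an even gap of length $2t$ in exactly $t$ rounds, the holes left behind being filled as the opposing waves pass, so a single blue vertex covers $2t+1$ vertices in time $t$ — the same coverage per blue vertex as your pairs, but with no parity restriction on $|S|$. Accordingly the paper colors every $(k+1)$st vertex for the largest even $k$ with $n\ge \frac{k^2}{2}$ and disposes of the remainder $r=n\bmod (k+1)$ by cases (odd $r$ automatically leaves an even short gap; even $r$ and $r=k$ are handled by lengthening or shortening some gaps by $2$ and adjusting the seed count). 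Your pairs idea does have a real merit the paper's construction lacks — the clean fill time $\lc g/4\rc$ holds for gaps of \emph{every} parity, eliminating the remainder case analysis — and it yields $\ths(C_n)\le\lc\sqrt{2n}-\frac{1}{2}\rc+1$ in general, but the exact value demands odd seed counts precisely in the tight cases where $n$ is close to $\frac{(2L+1)^2}{8}$, so as written the proposal does not prove the stated equality.
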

\bpf
The proof for the lower bound is the same as the proof of Proposition 2.5 in \cite{carlson2019throttling}. 

For the upper bound, we can start with the same construction as in the proof of Theorem 3.3 of \cite{carlson2019throttling}. We  initially color blue every $k+1$ vertex around the cycle, where $k$ is the largest even integer such that $n \geq \frac{k^2}{2}$. Let $r$ be the remainder when $n$ is divided by $k+1$. The initial blue coloring splits the white vertices into paths of size $k$ when $r=0$ and one short path containing $r-1$ white vertices when $r\ge 1$. Since $k$ is even, the white paths of size $k$ turn blue in $\frac{k}{2}$ rounds. If $r=0$ we are done and $\ths(C_n) \leq \lc \sqrt{2n}-\frac{1}{2} \rc$.  When $r\ge 1$ is odd, the number of white vertices in the short path is even, so the short path will also turn blue in at most $\frac{k}{2}$. We have used the same number of blue vertices as in the proof of \cite[Theorem 3.3]{carlson2019throttling}, so again $\ths(C_n) \leq \lc \sqrt{2n}-\frac{1}{2} \rc$. 

If $r\geq 1$ is even and $r < k$, then we can modify the initial coloring by increasing the lengths of $\frac{r}{2}$ of the white paths of length $k$ to $k+2$ and decreasing the length of the short path to $0$. Note that in this case, there are at least $\frac{k}{2}-1$ white paths of length $k$ before the modification because $n \geq \frac{k^2}{2}$. Moreover, we removed a blue vertex. Thus again we have $\ths(C_n) \leq \lc \sqrt{2n}-\frac{1}{2} \rc$. 

If $r = k$, then there are two possible values of $n$, namely $n = \lp\frac{k}{2}-1\rp(k+1)+k$ and  $n = \frac{k}{2}(k+1)+k$, since any other integer $m$ of the form $q(k+1)+k$ with integer $q$ has $m \geq \frac{(k+2)^2}{2}$ for $q > \frac{k}{2}$ and $m < \frac{k^2}{2}$ for $q < \frac{k}{2}-1$. For $n = \frac{k}{2}(k+1)+k$, we can again modify the initial coloring by increasing the lengths of the $\frac{r}{2}=\frac k 2$  white paths of length $k$ on the cycle to $k+2$, thereby decreasing the length of the short path to $0$. In this process we also removed a blue vertex, so we have $\ths(C_n) \leq \lc \sqrt{2n}-\frac{1}{2} \rc$.  For $n= \lp\frac{k}{2}-1\rp(k+1)+k$, we can modify the initial coloring by decreasing the lengths of  the white paths of length $k$ on the cycle to $k-2$, which decreases the propagation time by one.  The  number of white paths increases from $\frac k 2$ (consisting of $\frac k 2-1$ paths with $k$ white vertices and one short path with $k-1$ white vertices) to $\frac k 2+1$ (with each path having $k-2$ white vertices). Thus again we have $\ths(C_n) \leq \lc \sqrt{2n}-\frac{1}{2} \rc$.
\epf

\begin{prop}\label{path}
For all $n \geq 3$, $\ths(P_n) = \lc \sqrt{2(n+1)}-\frac{3}{2} \rc$. 
\end{prop}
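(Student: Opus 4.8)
My plan is to prove the equivalent identity $\ths(P_n)=\ths(C_{n+1})-1$ and then read the value off Proposition~\ref{cycle}. The guiding intuition is that the two leaves of $P_n$ together do the work of one extra blue vertex: in skew forcing a white leaf with a white neighbor always forces that neighbor, exactly as a blue vertex $v_0$ joined to both ends of the path would. So I attach a new vertex $v_0$ adjacent to the two ends of $P_n$, obtaining $C_{n+1}$ in which the ends of $P_n$ become the two neighbors of $v_0$.

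For the lower bound I would show that for every $S\subseteq V(P_n)$ the set $S\cup\{v_0\}$ skew-forces $C_{n+1}$ in at most $\pts(P_n;S)$ rounds. Letting $B^{\mathrm{path}}_r$ and $B^{\mathrm{cyc}}_r$ be the blue sets after $r$ rounds starting from $S$ and from $S\cup\{v_0\}$, I would prove by induction on $r$ that $B^{\mathrm{cyc}}_r\supseteq B^{\mathrm{path}}_r$. The only structural difference is that the two end vertices of $P_n$ acquire the neighbor $v_0$ in $C_{n+1}$; since $v_0$ is permanently blue and, inductively, every vertex blue in $P_n$ is blue in $C_{n+1}$, each vertex has at most as many white neighbors in $C_{n+1}$ as in $P_n$. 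Hence every force carried out in $P_n$ is still carried out (or has already occurred) in $C_{n+1}$, which gives the inductive step. Consequently $\ths(C_{n+1})\le |S|+1+\pts(P_n;S)=\ths(P_n;S)+1$, and minimizing over $S$ yields $\ths(P_n)\ge\ths(C_{n+1})-1$.

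For the upper bound I would adapt the construction in the proof of Proposition~\ref{cycle}. With $k$ the optimal even run length for $C_{n+1}$, I place blue vertices on $P_n$ so that every maximal white run has even length and the two runs at the ends of the path are bounded on the outside by the leaves; equivalently, I take a coloring witnessing $\ths(C_{n+1})\le\lc\sqrt{2(n+1)}-\frac12\rc$ and delete a blue vertex $v_0$ whose two adjacent runs are regular length-$k$ runs, so that these two runs become leaf-bounded end runs. The crucial local fact is that a white run of even length $k$ bounded on each side by a blue vertex \emph{or} by a leaf is filled in exactly $k/2$ rounds: a leaf drives an inward front at the same rate as a blue boundary, and (with the leaves left white and the blues in the interior) a run of odd length would stall, which is why every run is taken even. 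Deleting $v_0$ therefore decreases $|S|$ by one without increasing the propagation time, so $\ths(P_n)\le\ths(C_{n+1})-1$; the cases where $n+1$ is not a clean multiple of $k+1$ are dispatched exactly as the remainder cases in Proposition~\ref{cycle}.

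Combining the two bounds and applying Proposition~\ref{cycle} with $n+1\ge 4$ gives $\ths(P_n)=\ths(C_{n+1})-1=\lc\sqrt{2(n+1)}-\frac12\rc-1=\lc\sqrt{2(n+1)}-\frac32\rc$, using $\lc x\rc-1=\lc x-1\rc$. I expect the main obstacle to be the filling computation underpinning the upper bound: verifying carefully that the skew dynamics along a white run bounded by a leaf really match those of a run bounded by a blue vertex (white leaves force once and then stall, interior isolated blues produce "skip" fronts, and the two meet in the middle), and that the even-length bookkeeping never leaves a run stuck. Once this local fact is in hand, both inequalities collapse onto the already-established cycle formula.
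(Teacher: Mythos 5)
Your proof is correct, and your upper bound is exactly the paper's: the paper likewise takes an optimal blue set for $C_{n+1}$, deletes a blue vertex with no blue neighbor, and observes that the resulting leaf-bounded even runs of $P_n$ fill at the same rate (a leaf, having a unique neighbor, drives an inward front exactly like a white vertex adjacent to a blue boundary). Where you genuinely diverge is the lower bound. The paper re-runs the counting argument of Proposition 2.5 of \cite{carlson2019throttling} directly on the path, replacing the covering inequality $s(2p+1)\ge n$ by $s(2p+1)+2p\ge n$ to account for the two leaf-driven fronts, and then optimizes $|S|+\pts$. You instead prove the comparison $\ths(C_{n+1})\le\ths(P_n)+1$ by adjoining a permanently blue apex $v_0$ to the two ends and running a monotone simulation: your induction is sound, since if $x$ has unique white neighbor $w$ in the path at time $r$, then by the containment $B^{\mathrm{cyc}}_r\supseteq B^{\mathrm{path}}_r$ and the fact that the only new neighbor $v_0$ is blue, the white neighbors of $x$ in the cycle form a subset of $\{w\}$, so $w$ is blue in the cycle by round $r+1$. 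Quoting the cycle bound from Proposition \ref{cycle} and using $\lc x\rc-1=\lc x-1\rc$ then gives the clean identity $\ths(P_n)=\ths(C_{n+1})-1$, which the paper never states explicitly. Your route buys a conceptual two-sided reduction that packages the ``leaves act like one extra blue vertex'' intuition into a single lemma and avoids repeating the optimization arithmetic; the paper's route is more self-contained, producing the explicit constraint from which the ceiling formula is derived without invoking monotonicity of skew forcing under enlarging the blue set (a fact you use and which does hold, but deserves the one-line check you sketch). One cosmetic point: in your upper bound you insist the deleted vertex have two adjacent regular length-$k$ runs, which is stronger than needed and not always available in the remainder cases; the paper's weaker condition---any blue vertex with no blue neighbor---suffices, since an adjacent short even run also becomes a leaf-bounded even run and fills at least as fast.
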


\bpf
The proof for the lower bound is almost the same as the proof of Proposition 2.5 in \cite{carlson2019throttling}, except we use the inequality $s(2p+1)+2p \geq n$ instead of $s(2p+1) \geq n$ since the leaves can color their neighbors. 

For the upper bound, choose a set $S$ of blue vertices for the cycle on $n+1$ vertices that realizes $\ths(C_{n+1})=\lc\sqrt{2(n+1)}-\frac 1 2\rc$.  Delete a blue vertex $v$ that has no blue neighbor from the graph and the set $S$.  This results in the graph $P_n$ and set of blue vertices $S'=S\setminus \{v\}$ such that
\[\ths(P_n;S')=\lc\sqrt{2(n+1)}-\frac 1 2\rc-1=\lc\sqrt{2(n+1)}-\frac 3 2\rc.\qedhere\] 
\epf

As with standard and skew zero forcing, propagation time and throttling have been defined for PSD zero forcing (see Section \ref{s:intro} for the definitions of standard propagation time and throttling). The number of rounds to color the graph $G$ completely blue starting just with the vertices in $S$ blue is $\ptp(G;S)$ \cite{PSDpropTime}. For $G$ a graph and $S\subseteq V(G)$, define $\thp(G;S) = |S| + \ptp(G;S)$ and $\thp(G) = \min_{S \subseteq V(G)} \thp(G;S)$ \cite{carlson2019throttling}.  For paths and cycles skew throttling behaves more like PSD throttling than standard throttling, which for these graphs agrees with throttling for Cops and Robbers \cite{CR1}. 

\begin{rem} For a cycle $C_n$ with $n\ge 4$,
\bit
\item$\null$ \cite{carlson2019throttling} $\thz(C_n)= \begin{cases}
\lceil 2 \sqrt n - 1\rceil & \mbox{unless }n=(2k+1)^2 \\
2 \sqrt n & \mbox{if  }n=(2k+1)^2 
\end{cases}. $
\item $\null$ \cite{carlson2019throttling}
$\thp (C_n)= \left \lceil \sqrt{2n}-\frac{1}{2} \right \rceil.$
\item $\ths (C_n)= \left \lceil \sqrt{2n}-\frac{1}{2} \right \rceil.$
\eit

For a path $P_n$ with $n\ge 3$, 
\bit
\item$\null$ \cite{BY13throttling} $\thz (P_n)= \left \lceil 2\sqrt{n}-1 \right \rceil.  $
\item $\null$ \cite{carlson2019throttling}
$\thp (P_n)= \left \lceil \sqrt{2n}-\frac{1}{2} \right \rceil.$
\item $\ths (P_n)= \left \lceil \sqrt{2(n+1)}-\frac{3}{2} \right \rceil.$
\eit
\end{rem}

We use $T_{p,\ell}$ to denote the balanced spider with $p$ legs, each of length $\ell$; $T_{4,3}$ is shown in Figure \ref{f:T43spider}.  Observe that the order of $T_{p,\ell}$ is $n = p \ell + 1$. Note that $T_{p,1}=K_{1,p}$ and $\ths(K_{1,p})=p-1$ by Theorem \ref{t:high-th}, so the discussion here focuses on $\ell \ge 2$.

\begin{figure}[h]
\centering
\includegraphics[width=0.4\textwidth]{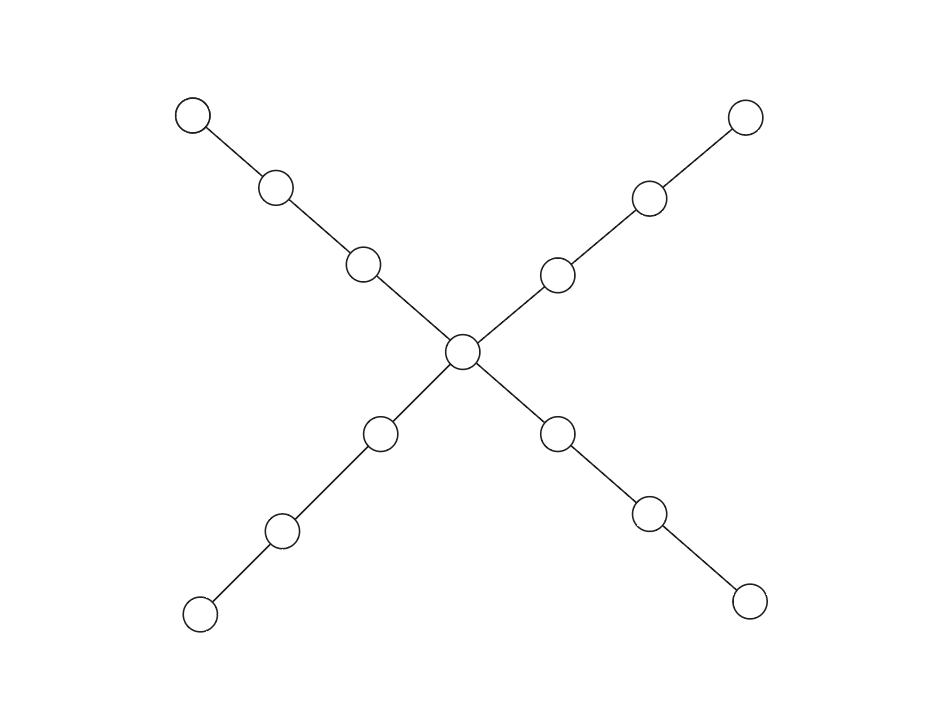}
\caption{$T_{4,3}$\label{f:T43spider}}
\end{figure}

\begin{thm}\label{t:balspider}
Let $\ell \ge 2$ be fixed and let $p  > \frac{\ell}{2} + 1$. Then
\[\ths(T_{p,\ell}) = 
\begin{cases}
1 + \frac{\ell}{2} &\text{ if }\ell \text{ is even}\\
1 +  p + \frac{\ell - 1}{4} &\text{ if } \ell = 4q+1 \text{ for some integer } q\\
1 +  p + \frac{\ell + 1}{4} &\text{ if } \ell = 4q+3 \text{ for some integer } q.
\end{cases}
\]
\end{thm}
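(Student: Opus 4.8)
The plan is to prove matching upper and lower bounds, organized around one structural principle: since the center $c$ has degree $p$ and $p>\frac{\ell}{2}+1$, the center can never perform a skew force early, and in fact it can force at most one of its neighbors during the entire process (once it colors a root it has no white neighbor left). Thus the forcing essentially decomposes leg‑by‑leg, with $c$ serving only as a \emph{static} blue boundary: it lets the first leg vertex $v_{i,1}$ push outward but cannot itself push inward. I would first record a \emph{segment lemma} describing how skew forcing colors a maximal white run on a leg according to its two boundary types. The cases needed are: (i) a run of length $m$ between two ``pushing'' blue vertices colors in $\lceil m/4\rceil$ rounds; (ii) a run bounded on one side by a pushing blue and on the other by a leaf, or by $c$ as a static boundary, colors in $\lceil m/2\rceil$ rounds; and (iii) a run bounded by $c$ on one side and a leaf on the other colors in $m/2$ rounds when $m$ is even but \emph{stalls}, leaving permanently white vertices, when $m$ is odd. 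Case (iii) is the parity obstruction that forces the $p$‑dependence for odd $\ell$.

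For the upper bounds I use explicit sets. When $\ell$ is even, take $S=\{c\}$: every leg is a Case (iii) run of even length $\ell$, so $\pts(T_{p,\ell};\{c\})=\ell/2$ and $\ths(T_{p,\ell})\le 1+\frac{\ell}{2}$. When $\ell$ is odd, take $S=\{c\}\cup\{v_{i,k}:1\le i\le p\}$ with $k$ the odd index nearest $\frac{\ell+1}{2}$; the blue vertex $v_{i,k}$ splits each leg into a center‑side run of length $k-1$ and a leaf‑side run of length $\ell-k$, each now having a pushing blue boundary, so by Case (ii) each leg colors in $\max(\lceil (k-1)/2\rceil,\lceil(\ell-k)/2\rceil)$ rounds. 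Taking $k=2q+1$ gives time $q$ when $\ell=4q+1$ and time $q+1$ when $\ell=4q+3$; since $|S|=p+1$, these give the claimed upper bounds.

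The lower bounds are where the real work lies. Fix an optimal $S$ with $t=\pts(T_{p,\ell};S)$, and for each leg let $s_i=|S\cap(\text{leg }i)|$ (excluding $c$), so $|S|=[c\in S]+\sum_i s_i$. Since $c$ forces at most once, at most one leg can be colored with $s_i=0$ (any other $s_i=0$ leg is a Case (iii) run and, for odd $\ell$, stalls), whence $\sum_i s_i\ge p-1$. For even $\ell$ the argument is short: if $|S|\ge\frac{\ell}{2}+1$ the bound is immediate, while if $|S|\le\frac{\ell}{2}$ then $p>\frac{\ell}{2}\ge|S|$ gives a clean leg into which, by $|S|<p-1$, the center cannot force early; the segment lemma shows such a leg colors exactly two vertices per round with its leaf last, so $t\ge\frac{\ell}{2}$, and with $|S|\ge1$ we get $\ths\ge1+\frac{\ell}{2}$. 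For odd $\ell$ I would run an exchange argument using the per‑leg time estimates: a leg with $s_i$ internal blues cannot be colored in fewer than $f(s_i,\ell)$ rounds, where $f(1,\ell)=q$ (resp.\ $q+1$) and $f$ drops by at most one when $s_i$ increases by one, while placing the internal blue off‑center or leaving $c$ white only delays completion. Because there are $p$ legs and $p>\frac{\ell}{2}+1$, shaving a round off $t$ forces adding an internal blue to every leg currently attaining the maximum, costing at least $p-1$ and saving only $1$; similarly, using the single permissible center‑force to ``rescue'' a clean leg merely defers that force until the other roots are blue, raising $t$. Minimizing $|S|+t=[c\in S]+\sum_i s_i+\max_i t_i$ subject to these constraints is then attained at $s_i=1$ for all $i$ together with $c\in S$, yielding $\ths\ge 1+p+q$ (resp.\ $1+p+q+1$).

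I expect the main obstacle to be precisely this odd‑$\ell$ exchange argument: proving rigorously that the one allowed center‑force (and the option of leaving $c$ white) can never be converted into a net saving, and that additional internal blue vertices never pay for themselves. This is exactly the step that consumes the hypothesis $p>\frac{\ell}{2}+1$, since it guarantees enough legs that any global speed‑up is too expensive and that $c$ is never nearly saturated early enough to force cheaply; carrying the constants (the various ``$+1$''s) through the case analysis will require care.
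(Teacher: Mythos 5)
Your upper-bound constructions are exactly the paper's ($S=\{c\}$ for even $\ell$; $c$ plus one blue per leg at an odd index near the middle for odd $\ell$, splitting each leg into even runs that are swept from both ends), and your lower-bound skeleton (at most one leg with $s_i=0$, so $\sum_i s_i\ge p-1$; all legs having $s_i\ge 2$ ruled out because $p>\frac{\ell}{2}+1$; equality analysis when some leg has exactly one blue) is also the paper's. The gap is that the step you explicitly defer -- the odd-$\ell$ equality analysis -- is where the paper does its real work, and two of your auxiliary claims fail there. First, ``leaving $c$ white only delays completion'' is false when $\ell=4q+3$: if some leg has a blue vertex at distance two from $c$, then the first vertex of that leg has $c$ as its unique white neighbor and forces $c$ in round one, and the optimal propagation time $\frac{\ell+1}{4}$ remains achievable with $c\notin S$. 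The paper handles this configuration by showing it necessarily carries a \emph{second} blue vertex in that leg (otherwise its leaf-side run, of odd length $\ell-2$, stalls under the two-ended sweep), so either $|S|\ge p+1$ with $\pts(T_{p,\ell};S)\ge\frac{\ell+1}{4}$, or $|S|\ge p$ with $\pts(T_{p,\ell};S)\ge\frac{\ell+1}{4}+1$; your exchange argument, which would conclude $c\in S$ at the optimum from the false premise, has no mechanism for this branch. Second, your empty-leg dismissal rests on $p-1+\frac{\ell+1}{2}\ge 1+p+\frac{\ell\pm 1}{4}$, which for $\ell\equiv 3\pmod 4$ requires $\ell\ge 7$; the paper verifies $\ths(T_{p,3})=p+2$ separately and only then assumes $\ell\ge 7$, whereas your uniform treatment silently breaks at $\ell=3$ (there $p-1+\frac{\ell+1}{2}=p+1<p+2$).

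A smaller but symptomatic problem is the typing in your segment lemma. Case (i) with rate $\lceil m/4\rceil$ holds only when both boundary blues genuinely have a unique white neighbor and can force; in your odd-$\ell$ construction the blue $v_{i,k}$ has two white neighbors and $c$ has $p$ of them, so both boundaries are \emph{static} -- you call them ``pushing'' but then correctly apply the static rate $\lceil m/2\rceil$, so the upper-bound arithmetic survives by accident. For the lower bound this looseness is not harmless: a boundary's type can change mid-process (an interior blue becomes pushing once one of its sides finishes, and $c$ can push once all other legs are done), so fixed per-run rates do not directly yield per-leg time lower bounds. Establishing the precise claim the paper uses -- a leg with exactly one blue vertex needs at least $\frac{\ell-1}{4}$ (resp.\ $\frac{\ell+1}{4}$) rounds, with equality only in the configurations enumerated above -- is the content your proposal names as its ``main obstacle'' but does not supply.
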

\bpf
If $\ell$ is even, consider the skew zero forcing set consisting of one vertex, specifically the center. This partitions the graph into $p$ disjoint copies of $P_{\ell}$. Therefore, the graph will be colored in $\frac{\ell}{2}$ rounds. For the lower bound, note first that $\Zs(T_{p,\ell})=1$. If there is a leg with no initial blue vertex, then the propagation time is at least $\frac{\ell}{2}$ and $\ths(T_{p,\ell}) \geq \frac{\ell}{2} + 1$. If every leg has an initial blue vertex, then this would not be an optimal skew throttling set because $p > \frac{\ell}{2} + 1$. Thus  $\ths(T_{p,\ell}) = \frac{\ell}{2} + 1$ when  $\ell$ is even.

Now suppose $\ell$ is odd and let $S$ denote the set of vertices that are blue initially.  First note that if at least two legs of $T_{p,\ell}$ do not contain any vertex in $S$, then $S$ is not a skew zero forcing set. Thus $|S|\ge p-1$. We split the analysis into two cases, based on the parity of $\ell\mod 4$. 

First, suppose that $\ell = 4q+1$ for some positive integer $q$. Consider the skew zero forcing set $S$ consisting of the center $c$ and each vertex at distance $d$ from $c$ where $d=\frac{\ell+1}2$. Thus, $G-S$ has been partitioned into $2p$ disjoint copies of $P_{\frac{\ell - 1}2}$ and the graph will be colored in $\frac{\ell - 1}{4}$ rounds. For the lower bound,  if there is a leg that does not contain any vertex in $S$, then $\pts(T_{p,\ell};S)\ge \frac{\ell+1}{2}$, which  implies that  
\beq\label{eq:4qp1} \ths(T_{p,\ell};S)\ge p-1+\frac{\ell+1}{2} \geq 1+p+\frac{\ell - 1}{4}\eeq since $\ell \geq 5$. Thus there must exist an optimal initial blue set $S$ of size at least $p$ with a blue vertex in every leg, and we assume we have chosen such a $S$. If there is a leg that has only one vertex in $S$, then the  propagation time is at least $\frac{\ell - 1}{4}$, and this is achieved only when the center vertex is also blue. If every leg has at least two vertices in $S$, then $S$ would not be an optimal skew throttling set because \beq\label{eq:4qp1-2} p > \frac{\ell}{2} + 1>  1+\frac{\ell-1}{4}.\eeq Thus  $\ths(T_{p,\ell}) = 1+p+\frac{\ell - 1}{4}$ when $\ell = 4q+1$ for some positive integer $q$.

Now suppose that $\ell = 4q+3$ for some  integer $q\ge 0$.  It is straightforward to verify that $\ths(T_{p,3})= p+2=1+p+\frac{3 + 1}{4}$, so we assume $\ell\ge 7$. The argument is similar to the case $\ell = 4q+1$. For the upper bound, the blue vertices in the legs are placed at distance $\frac{\ell-1}2$ from the center and $G-S$ is partitioned into $p$ disjoint copies of $P_{\frac{\ell-3}{2}}$ and $p$ disjoint copies of $P_{\frac{\ell+1}{2}}$.  The lower bound argument is the same until \eqref{eq:4qp1}, where $\frac{\ell - 1}{4}$ is replaced by $\frac{\ell + 1}{4}$, but the equation remains true because now $\ell \ge 7$.  The analysis in \eqref{eq:4qp1-2} also remains valid with $\frac{\ell - 1}{4}$ replaced by $\frac{\ell + 1}{4}$.  So we assume  an optimal $S$ in which each leg has at least one vertex in $S$ and there is a leg with only one vertex in $S$.  
In $G-S$, a leg with exactly one vertex in $S$ is best  partitioned into one $P_{\frac{\ell-3}{2}}$ and one $P_{\frac{\ell+1}{2}}$ and the best possible propagation time is $\frac{\ell + 1}{4}$.  This can be achieved in two ways: When the center  is in $S$, or when the $P_{\frac{\ell-3}{2}}$ is next to the center and there is a blue vertex  at distance two from the center. In the latter case, there must be another vertex in $S$ in the leg with the blue vertex at distance two from the center.  Thus $|S|\ge  p+1$ and $\pts(T_{p,\ell};S)\ge \frac{\ell + 1}{4}$, or $|S|\ge  p$ and $\pts(T_{p,\ell};S)\ge \frac{\ell + 1}{4}+1$.  So $\ths(T_{p,\ell}) = 1+p+\frac{\ell + 1}{4}$ when $\ell = 4q+3$ for some positive integer $q$. \epf

\begin{thm}
For all  $\ell, p \ge 2$, $\frac{1}{2}f(p, \ell) \leq \ths(T_{p,\ell}) \leq 3 f(p, \ell)$, where
\[f(p,\ell) = 
\begin{cases}
\min(\ell, \sqrt{p \ell}) &\text{ if }\ell \text{ is even}\\
\max(p, \sqrt{p \ell}) &\text{ if } \ell \text{ is odd}.
\end{cases}
\]
\end{thm}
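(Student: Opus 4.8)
The plan is to split on the size of $p$ relative to $\ell$ and to lean on Theorem~\ref{t:balspider} wherever it applies. First I would record two elementary facts: that $f(p,\ell)=\sqrt{p\ell}$ precisely when $p\le\ell$, while $f(p,\ell)=\ell$ (even $\ell$) or $f(p,\ell)=p$ (odd $\ell$) when $p>\ell$; and that $p>\ell\ge 2$ forces $p>\tfrac{\ell}{2}+1$, so that Theorem~\ref{t:balspider} is available on the whole range $p>\tfrac{\ell}{2}+1$, which contains all of $p>\ell$. On that range the exact value of $\ths(T_{p,\ell})$ is known, and both inequalities $\tfrac12 f\le\ths(T_{p,\ell})\le 3f$ reduce to routine estimates. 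For even $\ell$ one pairs $\ths(T_{p,\ell})=1+\tfrac{\ell}{2}$ with the bounds $\tfrac{\ell}{\sqrt2}<\sqrt{p\ell}\le\ell$ on the overlap $\tfrac{\ell}{2}+1<p\le\ell$ (where $f=\sqrt{p\ell}$), and with $f=\ell$ when $p>\ell$; the odd cases are analogous, using $\ths(T_{p,\ell})=1+p+\tfrac{\ell\mp1}{4}$ together with $p>\tfrac{\ell}{2}\ge\tfrac12\sqrt{p\ell}$ for the lower bound and $p\le\ell$ for the upper bound. I would present these as a short sequence of case checks.

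The substance lies in the complementary regime $p\le\tfrac{\ell}{2}+1$, where $f=\sqrt{p\ell}$ and I must prove $\tfrac12\sqrt{p\ell}\le\ths(T_{p,\ell})\le 3\sqrt{p\ell}$ directly. For the upper bound I would give an explicit construction. On each leg I place the initial blue vertices in \emph{adjacent pairs}: a blue pair acts as an ``active'' guard that forces outward starting in the first round, and I space the pairs so that every white segment between consecutive guards (and between the last guard and the leaf) has length about $g$. Because an anchored skew front advances two vertices per round, a segment of length $m$ bounded by active guards or a leaf colors in $\lceil m/4\rceil$ rounds, and starting every leg with a pair at $(u_1^i,u_2^i)$ forces the center in the first round at no cost. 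Taking $k$ pairs per leg gives $|S|\approx 2p\ell/g$ and $\pts\approx g/4$, and the choice $g\approx 2\sqrt{2p\ell}$ balances these to $\ths(T_{p,\ell})\lesssim\sqrt2\,\sqrt{p\ell}$. After accounting for integer rounding and the slightly slower leaf-end segment this stays below $3\sqrt{p\ell}$; the only degenerate range is $\ell=O(p)$, where $k<1$ and I would instead use a near-center coloring (the center together with one suitably placed vertex per leg, giving $\ths(T_{p,\ell})\le 1+p+\tfrac{\ell}{2}$), which is again $O(\sqrt{p\ell})$ since $\sqrt{p\ell}=\Theta(\ell)=\Theta(p)$ there, and the case $p=2$ is just $P_{2\ell+1}$, covered by Proposition~\ref{path}.

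For the lower bound the key is a \emph{capacity lemma}: from any set $S$ with $|S|=b$, the number of vertices colored within $t$ rounds is at most $b(2t+1)+pt+O(t)$, the three terms accounting respectively for the at most $2t$ vertices each blue vertex can be responsible for over $t$ rounds (the skew analogue of the path bound $s(2t+1)+2t\ge n$ used for Propositions~\ref{cycle} and \ref{path}, with $2$ leaves replaced by $p$), for the $p$ leaves, and for the center. Granting this, write $N=p\ell$ and $\tau=\ths(T_{p,\ell})=b+t$ for an optimal $S$. Using $bt\le\tau^2/4$, $b,t\le\tau$, and $p\le\ell$ (so $p\sqrt{p\ell}\le p\ell$), the inequality $N\le b(2t+1)+pt+O(t)$ becomes $N\le\tfrac{N}{8}+\tfrac{N}{2}+O(\sqrt N)$ whenever $\tau<\tfrac12\sqrt N$; this forces $N=O(1)$, so $\tau\ge\tfrac12\sqrt{p\ell}$ once $p\ell$ exceeds an absolute constant, with the finitely many remaining pairs $(p,\ell)$ checked by hand.

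The main obstacle is proving the capacity lemma with the correct \emph{linear} (not $\Theta(pt)$) dependence on the center. The difficulty is that skew forcing is not one-directional: a white vertex may force, and an anchored front advances at speed two, so the naive ``within distance $t$ of $S\cup\{\text{leaves}\}$'' argument fails, and crediting each leg with an independent speed-two cascade emanating from a blue center would overcount the capacity by a factor of $p$. I would resolve this by showing the center is a genuine bottleneck: it can force, and hence seed a self-propagating inward cascade into a leg, only when all but one of its $p$ leg-neighbors are already blue, so over the entire process it seeds essentially one surviving cascade and contributes only $O(t)$ colored vertices. Formalizing this — most cleanly by charging each colored vertex to a blue vertex or to the leaf of its own leg, and separately bounding the center-seeded vertices by $O(t)$ — is where the real work lies.
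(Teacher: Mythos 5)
Your case split for $p>\tfrac{\ell}{2}+1$ via Theorem \ref{t:balspider} and your upper-bound construction (adjacent blue pairs spaced $\Theta(\sqrt{p\ell})$ apart, so that anchored fronts advance two vertices per round) are sound and essentially identical to the paper's. The lower bound, however, rests on a capacity lemma that is false as stated, and the flaw is exactly in the center term you flagged. Your ``center is a bottleneck'' argument conflates the center \emph{performing} a force (which indeed requires all but one of its leg-neighbors to be blue) with the center \emph{enabling} forces: in skew forcing a white vertex may force, so the moment the center $c$ is blue, the first vertex $u_1^i$ of \emph{every} leg has exactly one white neighbor, namely $u_2^i$, and forces it. This launches $p$ simultaneous inward cascades, each coloring one vertex per round, so the center contributes $\Theta(pt)$, not $O(t)$, and no charging scheme can reduce this to a single surviving cascade. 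A concrete counterexample is the optimal set from Theorem \ref{t:balspider} itself: for even $\ell$ take $S=\{c\}$, so $b=1$ and $t=\tfrac{\ell}{2}$, and all $p\ell+1$ vertices are colored, while your lemma allows only $(2t+1)+pt+O(t)=\tfrac{p\ell}{2}+O(\ell)$ --- exceeded once $p$ is larger than a constant, including well inside your regime $p\le\tfrac{\ell}{2}+1$. (The plan is repairable: conceding $2pt$ --- $p$ leaf cascades plus $p$ center-seeded cascades --- still yields $p\ell\le \tfrac{p\ell}{8}+\tfrac{p\ell}{\sqrt 2}+O(\sqrt{p\ell})$ when $\tau<\tfrac12\sqrt{p\ell}$ and $p\le\tfrac{\ell}{2}+1$, which is a contradiction for large $p\ell$, but the margin is thin, the small cases remain, and this is precisely the bookkeeping you left unresolved.)

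The paper's lower bound avoids global capacity counting altogether, and in particular never has to analyze the center. Let $b$ be the \emph{minimum} number of initial blue vertices over the legs. If $b=0$, a blue-free leg alone forces $\pts\ge\tfrac{\ell}{2}\ge\tfrac12\sqrt{p\ell}$ (its two ends color at most two vertices of that leg per round). If $b\ge 1$, then since $b$ is a per-leg minimum, $|S|\ge pb$; by pigeonhole some leg has a white gap of length at least $\tfrac{\ell-b}{b+1}$, and since a gap is consumed at rate at most four vertices per round (two per end), $\pts\ge\tfrac{\ell-b}{4(b+1)}$. Optimizing gives
\[\ths(T_{p,\ell})\ \ge\ pb+\frac{\ell}{4(b+1)}-\frac14\ \ge\ \sqrt{\frac{p\ell}{2}}-\frac14\ \ge\ \frac{\sqrt{p\ell}}{2},\]
valid uniformly for all $p\le\ell$ with no exceptional pairs to check by hand (the paper accordingly splits at $p>\sqrt{p\ell}$, i.e.\ $p>\ell$, rather than at $p>\tfrac{\ell}{2}+1$). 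The key insight you are missing is that a \emph{single} leg simultaneously certifies both terms of the throttling sum --- $pb$ blue vertices globally and $\tfrac{\ell-b}{4(b+1)}$ rounds locally --- so nothing about waves interacting across legs, leaves, or the center ever needs to be controlled.
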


\begin{proof}
We split the proof into cases depending whether $p > \sqrt{p \ell}$ or $p \le \sqrt{p \ell}$.  In the case that $p > \sqrt{p \ell}$, then 
$p> \ell\ge 1+ \frac \ell 2$, so Theorem \ref{t:balspider} applies.  When $\ell $ is even, \[f(p,\ell)=\ell\ge \ths(T_{p,\ell})=1+ \frac \ell 2>\frac \ell 2= \frac 1 2 f(\ell,p).\]  When $\ell$ is odd, 
\[2f(p,\ell)=2p\ge 1 +  p + \frac{\ell + 1}{4}\ge \ths(T_{p,\ell})\ge 1 +  p + \frac{\ell - 1}{4}> \frac 1 2 p= \frac 1 2 f(\ell,p).\]

Now suppose $p \le \sqrt{p \ell}$, so $\sqrt{p \ell}\le\ell$ and $f(p,\ell)=\sqrt{p \ell}$.  For the lower bound, 
define $b$ to be the least number of initial blue vertices on any leg of the spider. If $b = 0$, then some leg has no initial blues, so the propagation time is at least $\frac{\ell}{2}\ge \frac 1 2 f(p,\ell)$. So assume $b > 0$.  Then there is a leg which has an uncolored interval between blue vertices of length at least $\frac{\ell-b}{b+1}$ by the pigeonhole principle, so the propagation time is at least $\frac{\ell-b}{4(b+1)}$. Since in this case there are at least $p b$ initial blue vertices and $p > 1$, we have $\ths(T_{p,\ell}) \geq p b + \frac{\ell}{4(b+1)} - \frac{1}{4} \geq \sqrt{\frac {p \ell}2}-\frac 1 4\ge \frac{\sqrt{p \ell}}{2}$.

For the upper bound,  use $2p \lc \frac{\ell}{2\lf\sqrt{p \ell} \rf+2} \rc \leq 2p \lc \frac{\ell}{2\sqrt{p \ell}} \rc \leq \frac{2\ell p}{\sqrt{p \ell}} \leq 2\sqrt{p \ell}$ initial blues arranged in adjacent pairs such that every white vertex is within distance $\sqrt{p \ell}$ of a blue vertex, for a propagation time of at most $\sqrt{p \ell}$.
\end{proof}

\begin{rem} Note that the method of the last proof can be used to obtain similar bounds for balanced spiders under other variants of throttling. In particular, the bound for the odd $\ell$ case in the last theorem has the same bound up to a constant factor as standard zero forcing throttling, while the even $\ell$ case has the same bound up to a constant factor as PSD zero forcing throttling and cop throttling. Specifically, we have $\thp(T_{p,\ell}) = \Theta(\min(\ell, \sqrt{p \ell}))$ for all positive $\ell$ and $p$, while $\thz(T_{p,\ell}) = \Theta(\max(p, \sqrt{p \ell}))$. It was shown in \cite{CR1} that the cop throttling number $\thc(T)$ is equal to $\thp(T)$ on all trees $T$, so we also get the same bounds up to a constant factor for $\thc(T_{p,\ell})$ as we do for $\thp(T_{p,\ell})$.
\end{rem} 

Our next result on hypercube graphs is an immediate corollary of the results of Kingsley \cite{King15}.

\begin{prop}{\rm \cite{King15}}
For $n \geq 2$, the $n^{\text{th}}$ hypercube $Q_n$  has $\Zs(Q_n) = 2^{n-1}$ and $\pts(Q_n) = 1$.
\end{prop}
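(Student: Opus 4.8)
The plan is to separate the statement into the upper bound $\Zs(Q_n)\le 2^{n-1}$ together with the propagation time, which is routine, and the matching lower bound $\Zs(Q_n)\ge 2^{n-1}$, which is the real content. For the upper bound I would identify $V(Q_n)$ with $\{0,1\}^n$ and take $S=\{x:x_n=0\}$, one facet of the cube, so that $|S|=2^{n-1}$. Each $x\in S$ has exactly one neighbor off the facet, namely the vertex obtained by flipping the last coordinate, while all of its remaining neighbors lie in $S$ and are already blue; hence in the first round every vertex of $S$ forces its partner in the opposite facet, and all these forces occur simultaneously. Thus $\pts(Q_n;S)=1$ and $\Zs(Q_n)\le 2^{n-1}$. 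Once the lower bound is in hand, $S$ is a minimum skew zero forcing set realizing propagation time $1$, and since $S\ne V(Q_n)$ forces $\pts(Q_n)\ge 1$, we conclude $\pts(Q_n)=1$.

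For the lower bound I would work with the standard obstruction for skew forcing: call a nonempty $F\subseteq V(G)$ a \emph{fort} if every vertex of $G$ has either $0$ or at least $2$ neighbors in $F$. A short argument shows that $S$ is a skew zero forcing set if and only if $V(G)\setminus S$ contains no fort: if a fort $F$ is disjoint from $S$, then the first vertex of $F$ to be forced would present its forcer with two white neighbors in $F$, a contradiction; conversely the terminal white set of a non-forcing $S$ is nonempty and is a fort. Two families of forts are readily available in $Q_n$ for $n\ge 2$: each class of the bipartition (all even-weight, or all odd-weight, vertices), of size $2^{n-1}$; and each neighborhood $N(v)$, which is a fort because $Q_n$ is triangle-free and any two vertices at distance $2$ have exactly two common neighbors. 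In this language the lower bound is equivalent to the assertion that every $U\subseteq V(Q_n)$ with $|U|>2^{n-1}$ contains a nonempty fort, i.e.\ that a fort-free set has at most $2^{n-1}$ vertices.

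The main obstacle is precisely this combinatorial lower bound, which is why the result is attributed to Kingsley rather than proved directly here. I would attempt an induction on $n$ using the decomposition $Q_n=Q_{n-1}\,\square\,K_2$ into two facets joined by a perfect matching: given a forcing $S$ with $|S|<2^{n-1}$, one facet receives fewer than $2^{n-2}$ blue vertices, so by the inductive hypothesis the induced set there is not forcing and yields a fort $F_0$ of that facet. The delicate point is to promote $F_0$ to a fort of all of $Q_n$ that is still disjoint from $S$: the matched copies of $F_0$ in the other facet may meet $S$, and a vertex can see exactly one neighbor of $F_0$ across the matching, so the naive union $F_0\cup m(F_0)$ need not be a fort. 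Controlling this gluing is the crux. A tempting shortcut, bounding $\Zs$ below by a maximum skew-symmetric (or zero-diagonal symmetric) nullity, does not appear to suffice: the natural tensor-product matrix $\sum_{k}I^{\otimes(k-1)}\otimes J\otimes I^{\otimes(n-k)}$ with $J=\left(\begin{smallmatrix}0&1\\-1&0\end{smallmatrix}\right)$ has nullity at most $\binom{n}{\lfloor n/2\rfloor}<2^{n-1}$ for $n\ge 3$, so the lower bound must be genuinely combinatorial.
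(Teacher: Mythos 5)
You have correctly and completely proved the easy half. The facet construction $S=\{x:x_n=0\}$ does give $\pts(Q_n;S)=1$ and hence $\Zs(Q_n)\le 2^{n-1}$ and, once the lower bound is known, $\pts(Q_n)=1$; your fort framework is also set up correctly, including the skew-specific point that the fort condition must hold at \emph{every} vertex of $G$ (not just vertices outside $F$, as in standard zero forcing), which is exactly what makes the stalled white set a fort. For comparison: the paper contains no proof of this proposition at all --- it is stated with the citation \cite{King15} and imported from Kingsley's thesis --- so your decision to flag the lower bound as the real content and defer it actually mirrors the paper's treatment. But judged as a proof, the attempt has a genuine gap, and you name it yourself: the inequality $\Zs(Q_n)\ge 2^{n-1}$, which is the entire substance of the statement, is never established. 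The forts you exhibit (bipartition classes and neighborhoods $N(v)$) only force a skew forcing set to \emph{meet} each fort, which yields a transversal bound far below $2^{n-1}$; and your induction stalls exactly where you say: a fort $F_0$ of the facet $Q_{n-1}$ is not a fort of $Q_n$, since the matched partner of a vertex of $F_0$ sees exactly one neighbor in $F_0$, and the natural repair $F_0\cup m(F_0)$ may intersect $S$. An unresolved ``crux'' is a gap, not a proof.

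One further caution: your closing claim that the lower bound ``must be genuinely combinatorial'' does not follow from computing the nullity of one particular matrix. The algebraic route bounds $\Zs(Q_n)$ below by the maximum nullity over \emph{all} skew-symmetric matrices with graph $Q_n$, and already for $n=3$ it succeeds where your tensor-sum matrix fails: the bipartite adjacency pattern of $Q_3$ between the even and odd vertices is nonzero everywhere except on the antipodal pairing, so after relabeling it is the ``zero diagonal, nonzero off-diagonal'' pattern, which supports the rank-$2$ matrix $B_{ij}=a_i-a_j$ (with distinct $a_i$). The skew-symmetric matrix with blocks $B$ and $-B^T$ then has nullity $2^3-2\operatorname{rank}(B)=8-4=4=2^{n-1}$, proving $\Zs(Q_3)\ge 4$. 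So ruling out the natural tensor-sum matrix rules out nothing, and a cleverer matrix (or Kingsley's actual argument, which is all the paper itself invokes) remains a live option for completing the bound in general.
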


\begin{cor}
For $n \geq 2$, the $n^{\text{th}}$ hypercube $Q_n$  has $\ths(Q_n) = 2^{n-1} + 1$. 
\end{cor}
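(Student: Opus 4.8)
The plan is to derive this directly from the preceding Proposition together with Remark \ref{r:small-pt}. The Proposition supplies the two ingredients $\Zs(Q_n) = 2^{n-1}$ and $\pts(Q_n) = 1$, and since $Q_n$ has an edge for $n \ge 2$, Remark \ref{r:small-pt} applies: the condition $\pts(G) \le 2$ gives $\ths(G) = \Zs(G) + \pts(G)$, and this identity is immediate when $\pts(G) = 1$. Substituting the two values then yields $\ths(Q_n) = 2^{n-1} + 1$.

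To exhibit the two inequalities explicitly, in case a self-contained argument is preferred over citing the remark, I would proceed as follows. For the upper bound, since $\pts(Q_n) = 1$ there is a minimum skew zero forcing set $S_0$ with $|S_0| = \Zs(Q_n) = 2^{n-1}$ and $\pts(Q_n; S_0) = 1$; hence $\ths(Q_n) \le \ths(Q_n; S_0) = |S_0| + \pts(Q_n; S_0) = 2^{n-1} + 1$. For the lower bound, I would observe that any set $S$ realizing $\ths(Q_n)$ must be a skew zero forcing set, so $|S| \ge \Zs(Q_n) = 2^{n-1}$, and because $Q_n$ has an edge we have $S \ne V(Q_n)$, which forces $\pts(Q_n; S) \ge 1$. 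Therefore $\ths(Q_n) = |S| + \pts(Q_n; S) \ge 2^{n-1} + 1$, and the two bounds combine to give equality.

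There is essentially no obstacle here: the statement is an immediate corollary once the skew zero forcing number and skew propagation time of the hypercube are in hand, and both are furnished by the cited Proposition of Kingsley. The only point requiring minor care is confirming that an optimal throttling set is genuinely a forcing set with positive propagation time, which holds precisely because $Q_n$ has an edge, as recorded in Remark \ref{r:small-pt}.
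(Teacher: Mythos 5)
Your proposal is correct and takes essentially the same route as the paper, which states the corollary as immediate from Kingsley's proposition; the identity $\ths(G)=\Zs(G)+\pts(G)$ when $\pts(G)\le 2$ is exactly the content of Remark \ref{r:small-pt}, and your explicit two-inequality version simply unwinds that remark (with the correct observation that an optimal throttling set must be a skew zero forcing set, and that $S\ne V(Q_n)$ since $Q_n$ has an edge). Nothing further is needed.
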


\begin{prop}{\rm \cite{King15}}
For a connected graph $G \neq K_1$, the skew zero forcing number of the corona $G \circ K_1$ is $\Zs(G \circ K_1) = 0$ and $\pts(G \circ K_1) = 2$. 
\end{prop}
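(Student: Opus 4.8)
The plan is to show directly that the empty set already skew-forces the whole graph, and that it does so in exactly two rounds. Write $V(G)=\{v_1,\dots,v_n\}$ with $n\ge 2$ (since $G$ is connected and $G\neq K_1$), and let $u_i$ be the leaf attached to $v_i$ in $G\circ K_1$, so that $u_i$ has $v_i$ as its only neighbor. First I would run the skew color change rule starting from $S=\emptyset$, so that every vertex is initially white, and track which vertices turn blue in each round.

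In the first round, each leaf $u_i$ has exactly one neighbor, namely $v_i$, and that neighbor is white; hence $u_i$ forces $v_i$ blue. The key point to check is that no leaf $u_i$ gets colored during this round: for $v_i$ to force $u_i$ we would need $v_i$ to have a unique white neighbor, but because $G$ is connected and $G\neq K_1$ each $v_i$ has a neighbor in $G$, so $v_i$ has at least two white neighbors (a $G$-neighbor together with $u_i$) and cannot force. Thus after round one every $v_i$ is blue and every $u_i$ is still white. In the second round, each $v_i$ now sees all of its $G$-neighbors blue, so its only remaining white neighbor is $u_i$; hence $v_i$ forces $u_i$ blue, and after round two the entire graph is blue.

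This shows $\emptyset$ is a skew zero forcing set, so $\Zs(G\circ K_1)=0$, and since the only set of size $0$ is the empty set, it is the unique minimum skew zero forcing set. Consequently $\pts(G\circ K_1)=\pts(G\circ K_1;\emptyset)$, which equals $2$: the leaves are all still white after round one (so the process is not finished in a single round) while everything is blue after round two. There is no substantive obstacle here; the only thing requiring attention is the verification in round one that the forces go from the leaves inward rather than from the centers outward, which is precisely where the hypothesis $G\neq K_1$ (equivalently, that every $v_i$ has positive degree in $G$) is used. Everything else is a direct application of the skew color change rule.
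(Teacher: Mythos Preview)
Your argument is correct. The paper does not supply its own proof of this proposition (it is cited from \cite{King15}), but your two-round analysis is exactly the reasoning the paper employs in Lemma~\ref{l:tp-02} for the corona $\wh G\circ K_1$: leaves force inward in round one, and each center then forces its pendant leaf in round two.
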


\begin{cor}
For a connected graph $G \neq K_1$, the skew throttling number of the corona $G \circ K_1$ is $\ths(G \circ K_1) = 2$. 
\end{cor}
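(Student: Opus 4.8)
The plan is to bound $\ths(G\circ K_1)$ from both sides, with each bound being essentially immediate from results already in hand. For the upper bound, I would invoke the preceding proposition, which supplies $\Zs(G\circ K_1)=0$ and $\pts(G\circ K_1)=2$. Because the unique minimum skew zero forcing set is the empty set, taking $S=\emptyset$ gives $\ths(G\circ K_1;\emptyset)=|\emptyset|+\pts(G\circ K_1;\emptyset)=0+2=2$, and hence $\ths(G\circ K_1)\le 2$.

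For the lower bound, I would appeal to Proposition \ref{p:th-1}, which characterizes $\ths(H)=1$ as holding exactly when $H=K_1$ or $H=rK_2$ for some $r\ge 1$. It therefore suffices to verify that $G\circ K_1$ is neither of these. Since $G$ is connected, attaching a pendant vertex to each vertex of $G$ leaves the graph connected, so $G\circ K_1$ is connected. As $G\neq K_1$, the graph $G$ has at least two vertices, so $G\circ K_1$ has at least four vertices; in particular it is not $K_1$, and being connected on more than two vertices it cannot equal $rK_2$ for any $r$. Thus $\ths(G\circ K_1)\neq 1$, which forces $\ths(G\circ K_1)\ge 2$.

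Combining the two inequalities yields $\ths(G\circ K_1)=2$. The only point requiring attention is the elementary structural observation that $G\circ K_1$ is connected with at least four vertices, which is what excludes the two exceptional families of Proposition \ref{p:th-1}; there is no genuine obstacle, since all of the quantitative content is carried by the cited value $\pts(G\circ K_1)=2$.
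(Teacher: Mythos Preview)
Your proposal is correct and matches the argument the paper leaves implicit (no proof is written for this corollary). The upper bound via $S=\emptyset$ and the lower bound via Proposition~\ref{p:th-1} are exactly the natural steps; alternatively, one could cite Remark~\ref{r:small-pt} (since $\pts\le 2$ forces $\ths=\Zs+\pts$) or Lemma~\ref{l:tp-02} directly, but these amount to the same reasoning.
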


\begin{prop}
For any graph $G$, $\ths(G \circ K_2) \leq |G| + 1$.
\end{prop}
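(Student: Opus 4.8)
The plan is to prove the bound by exhibiting a single explicit initial set and computing its skew throttling value directly. Write $V(G)=\{v_1,\dots,v_n\}$ with $n=|G|$, and recall that in $G\circ K_2$ each vertex $v_i$ is joined to both vertices, say $a_i$ and $b_i$, of its own copy of $K_2$. Since $a_ib_i$ is an edge, the three vertices $v_i,a_i,b_i$ induce a triangle. I would take the initial blue set to be $S=V(G)$, the embedded copy of $G$'s vertices, so that $|S|=n$.

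Next I would run the skew forcing process for a single round. With every $v_i$ blue, within the $i$th triangle the vertex $a_i$ has exactly one white neighbor, namely $b_i$ (since $v_i$ is already blue), so $a_i$ forces $b_i$; simultaneously $b_i$ has exactly one white neighbor $a_i$, so $b_i$ forces $a_i$. Because the skew color change rule permits a white vertex to force, both of these forces occur in the same first round, coloring $a_i$ and $b_i$ blue for every $i$. Hence after one round the entire graph $G\circ K_2$ is blue, giving $\pts(G\circ K_2;S)=1$ and therefore
\[\ths(G\circ K_2)\le \ths(G\circ K_2;S)=|S|+\pts(G\circ K_2;S)=n+1=|G|+1.\]

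The only point requiring care is the verification of the simultaneous double force inside each triangle: one must confirm that, with $v_i$ blue, each of $a_i$ and $b_i$ has precisely one white neighbor, so that the skew rule (which, unlike standard zero forcing, allows white vertices to force) applies to both of them in the same round. Once this is checked the bound follows immediately; notably, no induction or case analysis on the structure of $G$ is needed, which is exactly why the inequality holds for every graph $G$. One could also remark afterward that the bound is merely an upper estimate, since a favorable structure of $G$ may permit a smaller initial set and thus a strictly smaller skew throttling number.
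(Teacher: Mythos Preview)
Your proof is correct and follows essentially the same approach as the paper: both take $S=V(G)$ as the initial blue set and observe that each pendant $K_2$ colors itself in a single round, giving $\ths(G\circ K_2;S)=|G|+1$. Your write-up simply spells out more explicitly why the skew rule applies to the white vertices $a_i,b_i$ once $v_i$ is blue.
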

\bpf
Consider the skew zero forcing set consisting of all vertices in $G$. Then, the remaining vertices which are $K_2$'s attached to each vertex in $G$ are forced in one round.
\epf

In general, $|G| + 1$ does not serve as a lower bound for $\ths(G \circ K_2)$. For example, suppose a connected graph $G$ contains $\ell \geq 3$ leaves. The set of all vertices of graph $G$ except for the leaves serves as a skew zero forcing set with the copies of $K_2$ attached to each initially blue vertex being forced in the first round, the leaves being forced in the second round, and finally the $\ell$ copies of $K_2$ are forced no later than the third and final round. Thus, $\ths(G \circ K_2) \leq |G| - \ell + 3 \leq |G| - 3 + 3 = |G|$.

Our next bound for graphs of fixed diameter is sharp, as shown by paths and cycles. However, we also find a much larger family of graphs that achieve this bound. 
The {\em ball} $B(v,r)$ at vertex $v$ of radius $r$ in $G$ is all vertices at distance at most $r$ from $v$.
 
 \begin{lem}\label{lem:ball} Let $G$ be a graph, let $L=\{y_1,\dots,y_\ell\}$ denote the set of leaves of $G$, let $S=\{x_1,\dots,x_k\}\subseteq V(G)$, and let $t=\pts(G;S)$.  Then   
 \[V(G)= \bigcup_{i=1}^\ell B(y_i,2t)\ \bigcup \ \bigcup_{j=1}^k B(x_j,2t).\]
 \end{lem}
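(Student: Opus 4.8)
The plan is to prove the nontrivial containment $V(G)\subseteq \bigcup_{i} B(y_i,2t)\cup\bigcup_{j} B(x_j,2t)$; the reverse containment is immediate since each ball is a set of vertices of $G$. (Here $t$ is finite because $\pts(G;S)$ is defined, i.e.\ $S$ is a skew zero forcing set.) Concretely, I would show that every vertex $v$ lies within distance $2t$ of some leaf $y_i$ or some initial blue vertex $x_j$. To set this up, record for each vertex a \emph{coloring time} $c(w)$, where $c(w)=0$ if $w\in S$ and otherwise $c(w)$ is the round in which $w$ is colored; since $\pts(G;S)=t$ we have $c(w)\le t$ for every $w$. For each $w\notin S$ let $\phi(w)$ be a vertex that forces $w$, so that at the start of round $c(w)$ the only white neighbor of $\phi(w)$ is $w$; in particular every neighbor of $\phi(w)$ other than $w$ has coloring time at most $c(w)-1$.

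The core is a single \emph{trace-back step} that, from a current vertex $w$ with $w\notin S\cup L$, produces a vertex $w'$ with $c(w')<c(w)$ and $\dist(w,w')\le 2$, or else lands in $S\cup L$ at distance $1$. Set $u=\phi(w)$ and split into cases. If $u\in S\cup L$, stop, since $\dist(w,u)=1$. If $u\notin S\cup L$ and $u$ is already blue at the start of round $c(w)$ (that is, $c(u)\le c(w)-1$), take $w'=u$, giving $\dist(w,w')=1$ and $c(w')<c(w)$. The remaining case is the skew-specific one and the main obstacle: $u$ is still \emph{white} when it forces $w$, so the usual reverse forcing chain through blue forcers is unavailable. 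Here I use that $u\notin L$ forces $\deg u\ge 2$, so $u$ has a neighbor $u'\neq w$; because $u$'s only white neighbor is $w$, this $u'$ is blue at the start of round $c(w)$, whence $c(u')\le c(w)-1$. Taking $w'=u'$ yields $c(w')<c(w)$ along the length-$2$ walk $w,u,u'$. This forced detour through a blue neighbor of a white forcer is exactly what replaces the radius $t$ of ordinary forcing by the radius $2t$ in the statement.

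Finally I would iterate the step from $w=v$, building a walk $v=w_0,w_1,\dots$ and maintaining the invariant that the walk from $v$ to $w_m$ has length at most $2\bigl(c(v)-c(w_m)\bigr)$; this holds initially and is preserved because each step adds at most $2$ to the walk length while decreasing $c$ by at least $1$. Since $c$ strictly decreases and is bounded below by $0$, the process terminates after at most $c(v)\le t$ steps, either by reaching a vertex with $c=0$ (necessarily some $x_j\in S$) or by landing in $S\cup L$ via the stopping rule. In the first case the walk length, and hence $\dist(v,x_j)$, is at most $2c(v)\le 2t$; in the stopping cases, since the current vertex had $c\ge 1$, one checks the accumulated length is at most $2t$ as well. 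Either way $v\in B(x_j,2t)$ or $v\in B(y_i,2t)$, which gives the covering. The only routine points are the precise bookkeeping of colors ``at the start of round $c(w)$'' and the trivial edge cases $v\in S$ and $t=0$.
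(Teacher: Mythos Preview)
Your proof is correct and rests on the same key observation as the paper's: a vertex that performs a force is either a leaf or is adjacent to an already-blue vertex, so each force lands within distance $2$ of $S\cup L$ (or of the set blue after the previous round). The paper runs this forward as an induction on rounds, showing the blue set after round $r$ lies in $\bigcup_i B(y_i,2r)\cup\bigcup_j B(x_j,2r)$; you run the same idea backward, tracing from $v$ through forcers (or their blue neighbors) to reach $S\cup L$ while the invariant $\mathrm{length}\le 2(c(v)-c(w_m))$ controls the distance. These are dual phrasings of one argument rather than genuinely different routes.
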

  \bpf A vertex  can perform a force in the first round if and only if it has at most one white neighbor.  Thus in order to force, a vertex must be a leaf (so it has only one neighbor) or it is a neighbor of a blue vertex. Thus any vertex colored blue in the first round must be at distance at most two from a blue vertex or a leaf, i.e. in some $B(y_i,2)$ or $B(x_j,2)$. This process is iterated through the $t$ rounds.   \epf

\begin{thm}\label{diamd}
For connected graphs $G$ of diameter $d\ge 4$ with minimum degree at least two, $\ths(G) \ge \sqrt{d}-\frac{1}{4}$. 
\end{thm}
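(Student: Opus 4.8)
The goal is a lower bound $\ths(G) \ge \sqrt{d} - \tfrac14$ for connected graphs of diameter $d \ge 4$ with minimum degree at least two. The key available tool is Lemma \ref{lem:ball}, which says that if $S = \{x_1,\dots,x_k\}$ is an initial blue set and $t = \pts(G;S)$, then $V(G)$ is covered by the balls $B(x_j, 2t)$ together with balls $B(y_i, 2t)$ around the leaves $y_1,\dots,y_\ell$. Since the minimum degree is at least two, $G$ has \emph{no leaves}, so the leaf-ball term vanishes entirely and we get the clean covering $V(G) = \bigcup_{j=1}^k B(x_j, 2t)$. This is exactly the leverage I want to exploit.

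Let me sketch the proof.

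**The core argument.** The plan is to fix a throttling-optimal set $S$ with $|S| = k$ and $\pts(G;S) = t$, so that $\ths(G) = k + t$. I would pick two vertices $u, w$ realizing the diameter, so $\dist(u,w) = d$. By the covering from Lemma \ref{lem:ball} (with no leaf terms), both $u$ and $w$ lie in balls of radius $2t$ around points of $S$, and in fact \emph{every} vertex on a shortest $u$–$w$ path of length $d$ is covered by some $B(x_j, 2t)$. The geometric heart of the argument is that a single ball $B(x_j, 2t)$ can contain only a bounded-length subpath of a geodesic: since any two vertices in $B(x_j,2t)$ are within distance $4t$ of each other, a ball can cover at most $4t + 1$ of the $d+1$ vertices on a shortest $u$–$w$ path. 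Covering all $d+1$ path vertices therefore requires $k(4t+1) \ge d+1$, i.e. $k \ge \frac{d+1}{4t+1}$.

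**Optimizing the trade-off.** From $k \ge \frac{d+1}{4t+1}$, I would bound
\[
\ths(G) = k + t \ge \frac{d+1}{4t+1} + t.
\]
Now I minimize the right-hand side over $t > 0$. Writing $g(t) = \frac{d+1}{4t+1} + t$, the minimum of the continuous relaxation occurs where $g'(t) = 0$, i.e. $-\frac{4(d+1)}{(4t+1)^2} + 1 = 0$, giving $4t+1 = 2\sqrt{d+1}$, hence the minimum value $g(t^*) = \frac{d+1}{2\sqrt{d+1}} + \frac{2\sqrt{d+1}-1}{4} = \frac{\sqrt{d+1}}{2} + \frac{\sqrt{d+1}}{2} - \frac14 = \sqrt{d+1} - \frac14 \ge \sqrt{d} - \frac14$. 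So the bound $\ths(G) \ge \sqrt{d} - \frac14$ drops out cleanly from the single inequality $k \ge \frac{d+1}{4t+1}$ combined with calculus.

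**Anticipated obstacle.** The main point requiring care is the geodesic-covering step: I must argue rigorously that a ball $B(x_j, 2t)$ intersects a fixed shortest $u$–$w$ path in at most $4t+1$ consecutive vertices. The clean way is the triangle inequality — if two vertices of the geodesic both lie in $B(x_j,2t)$, their mutual distance is at most $4t$, and along a geodesic the intrinsic distance equals the number of steps between them, so the indices differ by at most $4t$, giving at most $4t+1$ covered path-vertices. I would also double-check the boundary bookkeeping (off-by-one in counting the $d+1$ vertices versus $d$ edges) and confirm the trade-off optimization handles the integrality of $k$ and $t$ gracefully, which it does since the continuous lower bound $\sqrt{d+1}-\tfrac14$ already exceeds the target $\sqrt{d}-\tfrac14$.
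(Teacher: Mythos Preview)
Your proof is correct and follows essentially the same approach as the paper: both invoke Lemma~\ref{lem:ball} (with the leaf term vanishing), cover a diameter-realizing geodesic by the balls $B(x_j,2t)$, and extract a trade-off inequality between $k$ and $t$. The only cosmetic differences are that the paper uses pigeonhole on $k+1$ equally spaced geodesic vertices (yielding $4t \ge \lfloor d/k\rfloor$) and then AM-GM on $k + \tfrac{d}{4k}$, whereas you count directly that each ball meets the geodesic in at most $4t+1$ vertices and optimize by calculus, obtaining the slightly sharper $\sqrt{d+1}-\tfrac14$.
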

\begin{proof} 
Suppose that $G$ has diameter $d$ and minimum degree at least two,  $S=\{x_1,\dots,x_k\}\subseteq V(G)$ is a skew zero forcing set for $G$ such that $\ths(G)=\ths(G;S)$, and  $t=\pts(G;S)$. By Lemma \ref{lem:ball}, $V(G)=  \bigcup_{j=1}^k B(x_j,2t)$  since $G$ has no leaves. Let $v_1$ and $v_{d+1}$ be vertices that have distance $d$ in $G$ and let $v_1, \dots, v_{d+1}$ be a shortest path between these vertices.  Consider the $k+1$ vertices $v_{1+i\lf\frac d k\rf}$ with $i=0,\dots,k$.
By the pigeonhole principle,  two of these vertices must be in the same $ B(x_j,2t)$, so $4t \geq \lf\frac{d}{k}\rf>\frac{d}{k}-1 $. Thus 
\[ \ths(G)=\ths(G;S)= k + t \geq k + \frac 1 4\lp\frac{d}{k}-1\rp \geq \sqrt{d}-\frac{1}{4}.\qedhere\]

\end{proof}

Besides paths and cycles, the families of graphs $C_n \circ K_2$ and $P_n \circ K_2$ also achieve the bound in Theorem \ref{diamd} up to a constant factor. 
Using adjacent pairs of initial blue vertices on the cycle and path at intervals of approximately $\sqrt{n}$, the graphs $C_n \circ K_2$ and $P_n \circ K_2$ are colored in $\Theta(\sqrt{n})$ rounds using $\Theta(\sqrt{n})$ initial blue vertices. Figure \ref{f:CnoK2} illustrates this coloring   for $C_n \circ K_2$.

\begin{figure}[h]
\centering
\includegraphics[width=0.4\textwidth]{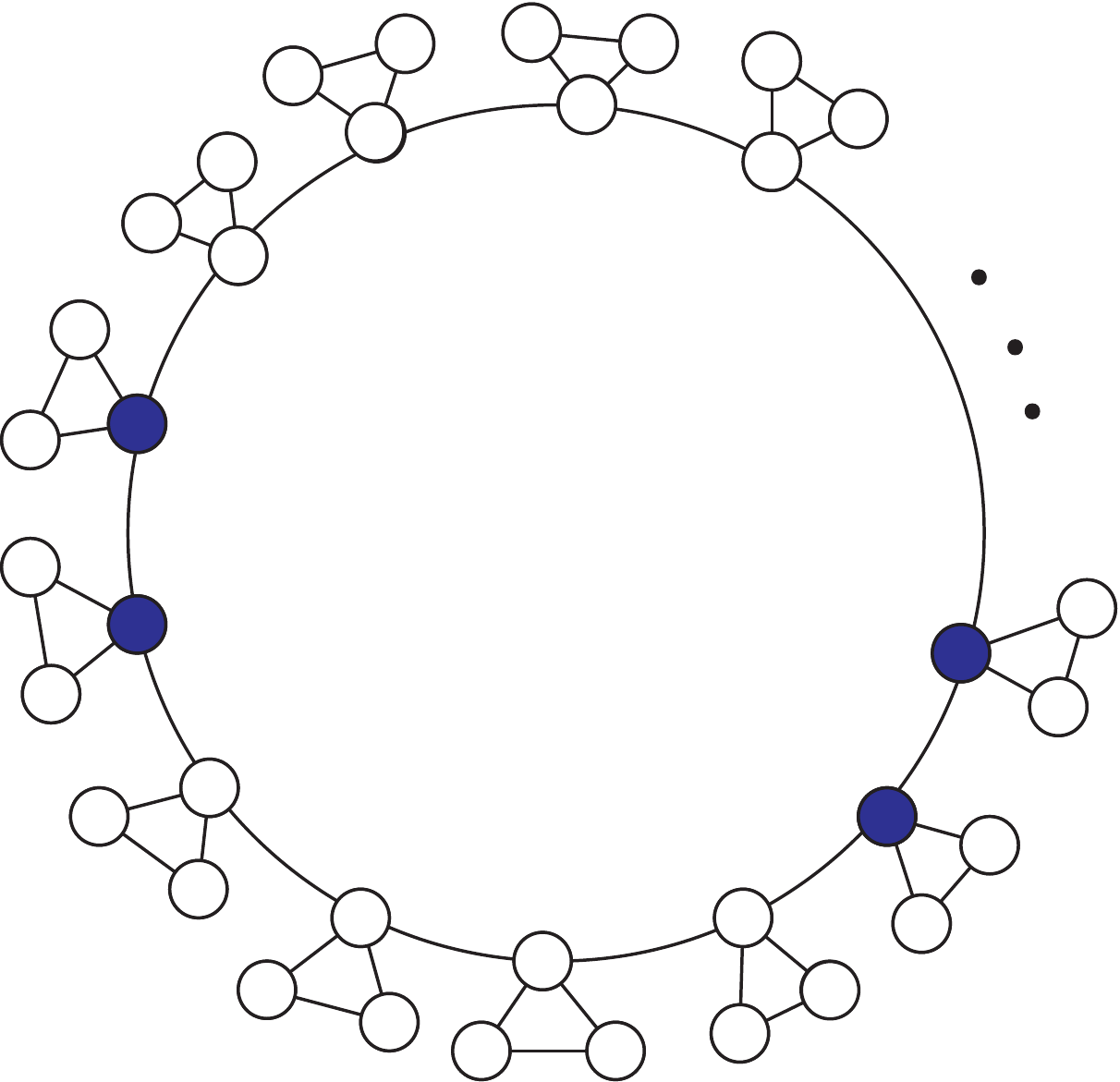}
\caption{An initial coloring that achieves $\ths(C_n \circ K_2) = O(\sqrt{n})$\label{f:CnoK2}}
\end{figure}

The same strategy can be used for a family of arbitrarily large order and  diameter $d$ to obtain the same $\Theta(\sqrt d)$ value, with the lower bound following from Theorem \ref{diamd}. Let $\mathcal{G}$ be the family of graphs that can be constructed by starting with a path or cycle and then for each vertex $v$ of the path or cycle, connecting any number  of copies (including none) of $K_2$ to $v$ (with one or both vertices of the $K_2$ adjacent to $v$). 

\begin{prop}
For any graph $G \in \mathcal{G}$ of diameter $d$, $\ths(G) = \Theta(\sqrt{d})$. 
\end{prop}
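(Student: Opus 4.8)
The plan is to prove the two bounds $\ths(G)=O(\sqrt d)$ and $\ths(G)=\Omega(\sqrt d)$ separately, first reducing the diameter to the length of the spine. Write $H$ for the path or cycle that $G$ is built on and let $n=|V(H)|$. Because each pendant $K_2$ hangs off $H$ as a tree, no pendant creates a shortcut between two vertices of $H$, so $H$ is isometrically embedded in $G$; hence the diameter of $G$ is at least the diameter of $H$ and at most that plus $4$ (a pendant adds at most $2$ at each end of a diametral path). Thus $d=\Theta(n)$, and it suffices to prove $\ths(G)=\Theta(\sqrt n)$.

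For the upper bound I would use the construction indicated in Figure~\ref{f:CnoK2}. Put $m=\lceil\sqrt n\rceil$ and color blue the vertices of $H$ in adjacent pairs spaced $m$ apart, using $O(n/m)=O(\sqrt n)$ blue vertices. First observe that the pendants of any blue spine vertex become blue within two rounds: a triangle pendant $\{a,b\}$ (both joined to $v$) is finished in one round once $v$ is blue, and a pendant path $v\!-\!a\!-\!b$ is finished in two rounds (the leaf $b$ forces $a$ in round $1$, and $a$ forces $b$ once $v$ is blue). Once a blue pair $v_i,v_{i+1}$ has its pendants blue, $v_i$ has $v_{i-1}$ as its unique white neighbor and forces it, and symmetrically $v_{i+1}$ forces $v_{i+2}$; the newly colored spine vertex finishes its own pendants in one further round and then continues the wave, advancing one spine vertex every two rounds. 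Thus the two waves entering a white segment of length $\approx m$ meet after $O(m)=O(\sqrt n)$ rounds, and the last pendants add only $O(1)$. Hence $\ths(G)\le O(\sqrt n)+O(\sqrt n)=O(\sqrt d)$.

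The lower bound is the delicate part, and I expect it to be the main obstacle, because graphs in $\mathcal G$ may have leaves (the far endpoints of pendant paths), so Theorem~\ref{diamd} does not apply directly and Lemma~\ref{lem:ball} is too weak: if every spine vertex carries a pendant leaf, the leaf balls already cover $V(G)$ for small $t$, yet the true throttling number is $\Theta(\sqrt n)$. The key is to show that such leaves are \emph{inert} for coloring the spine. Concretely, I would prove that if the two vertices $a,b$ of a pendant $K_2$ at $v$ are both initially white, then $a$ can never force $v$: tracing the forces, $a$ forces $v$ only after its partner is blue, and the partner becomes blue only after $v$ is already blue, a contradiction. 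Consequently the only vertices whose forcing chains can reach $H$ are the vertices of $S$ and, when $H$ is a path, its two endpoints; call these the \emph{sources}, of which there are at most $k+2$, where $k=|S|$.

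Finally I would run the pigeonhole argument of Theorem~\ref{diamd} on $H$ rather than on $G$. Let $S$ be optimal with $t=\pts(G;S)$, so $\ths(G)=k+t$. Tracking forcing chains as in Lemma~\ref{lem:ball}, every spine vertex colored within $t$ rounds lies within distance $2t$ of a source, and since each source is within distance $2$ of $H$, it accounts for an interval of only $O(t)$ consecutive vertices of a geodesic of $H$. Covering a geodesic of $H$ of length $\Theta(d)$ by at most $k+2$ such intervals forces $(k+2)(t+1)=\Omega(d)$, whence the arithmetic–geometric mean inequality gives $k+t=\Omega(\sqrt d)$. Combining with the upper bound yields $\ths(G)=\Theta(\sqrt d)$.
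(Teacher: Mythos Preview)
Your upper-bound construction is essentially the paper's: place adjacent blue pairs on the spine at spacing roughly $\sqrt{n}$ and track the wave, which advances one spine vertex every two rounds, giving $O(\sqrt{n})=O(\sqrt{d})$. (Minor slip: when $v$ is already blue, a path pendant $v\!-\!a\!-\!b$ finishes in a single round, since $a$ forces $b$ and $b$ forces $a$ simultaneously; this only helps the bound.)

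The lower bound is where your route diverges from the paper. The paper's entire argument is the one-line observation that every $G\in\mathcal{G}$ has minimum degree at least two, after which Theorem~\ref{diamd} applies verbatim. You instead take seriously that, under the stated definition of $\mathcal{G}$, leaves may exist (the far vertex $b$ of a path-type pendant $v\!-\!a\!-\!b$, or a bare path endpoint), so that Theorem~\ref{diamd} and the raw ball-covering of Lemma~\ref{lem:ball} are not immediately available. Your fix --- showing that a pendant $K_2$ with both vertices initially white can never force its spine vertex, so that the only effective ``sources'' for coloring $H$ are the vertices of $S$ together with the at most two path endpoints, and then rerunning the pigeonhole argument of Theorem~\ref{diamd} along a geodesic of $H$ --- is sound and yields $(k+2)\cdot O(t)\ge \Theta(d)$, hence $k+t=\Omega(\sqrt{d})$. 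What this buys you is a proof that is robust to the presence of leaves; what the paper's version buys is brevity, at the cost of relying on a minimum-degree claim that your reading of the definition (reasonably) calls into question.
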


\begin{proof}
Observe that  the minimum degree of a graph $G\in \mathcal{G}$ is at least two, so the lower bound follows from Theorem \ref{diamd}. Suppose that $G\in \mathcal{G}$ is built by starting with a $P_n$ or $C_n$ subgraph. For the upper bound, place adjacent pairs of initial blue vertices on the cycle or path in $G$ at intervals of approximately $\sqrt{n}$, and also one at each at of the ends if $G$ is built from a $P_n$ subgraph. In the first round, all of the copies of $K_2$ attached to the initial blue vertices turn blue. By the second round, the neighbors of the initial blue vertices on the cycle or path turn blue. In the $(2i+1)^{\text{st}}$ round, copies of $K_2$ will turn blue if they are attached to the vertices on the cycle or path that turned blue in the $(2i)^{\text{th}}$ round. By the $(2i+2)^{\text{nd}}$ round, vertices on the cycle will turn blue if they are adjacent to vertices that turned blue in the $(2i)^{\text{th}}$ round. Thus the graph is colored entirely blue in approximately $\sqrt{n}$ rounds. 
\end{proof}



\end{document}